\newtheorem{theorem}{Theorem}[section]
\newtheorem{definition}[theorem]{Definition}
\newtheorem{observation}[theorem]{Observation}
\newtheorem{proposition}[theorem]{Proposition}
\newtheorem{remark}[theorem]{Remark}
\title{\bf \sc Associated graphs of Certain Arithmetic IASI Graphs}
\author{{\bf N K Sudev \footnote{Department of Mathematics, Vidya Academy of Science \& Technology, Thalakkottukara, Thrissur - 680501, email: {\em sudevnk@gmail.com}}} and {\bf K A Germina\footnote{Department of Mathematics, School of Mathematical \& Physical Sciences, Central University of Kerala, Kasaragod, email:{\em srgerminaka@gmail.com}}}}
\date{}
\begin{document}
\maketitle

\begin{abstract}
Let $\mathbb{N}_0$ be the set of all non-negative integers and $\mathcal{P}(\mathbb{N}_0)$ be its power set. An integer additive set-indexer (IASI) is defined as an injective function $f:V(G)\to \mathcal{P}(\mathbb{N}_0)$ such that the induced function $f^+:E(G) \to \mathcal{P}(\mathbb{N}_0)$ defined by $f^+ (uv) = f(u)+ f(v)$ is also injective, where $f(u)+ f(v)$ is the sum set of the sets $f(u)$ and $f(v)$. A graph $G$ which admits an IASI is called an IASI graph. An arithmetic integer additive set-indexer is an integer additive set-indexer $f$, under which the set-labels of all elements of a given graph $G$ are the sets whose elements are in arithmetic progressions. In this paper, we discuss about admissibility of arithmetic integer additive set-indexers by certain associated graphs of the given graph $G$, like line graph, total graph, etc. 
\end{abstract}
\textbf{Key words}: Integer additive set-indexers, arithmetic integer additive set-indexers, isoarithmetic integer additive set-indexers,biarithmetic integer additive set-indexer, semi-arithmetic set-indexer.\\
\textbf{AMS Subject Classification : 05C78}

\section{Introduction}

For all  terms and definitions, not defined specifically in this paper, we refer to \cite{FH} and for more about graph labeling, we refer to \cite{JAG}. Unless mentioned otherwise, all graphs considered here are simple, finite and have no isolated vertices. All sets mentioned in this paper are finite sets of non-negative integers. We denote the cardinality of a set $A$ by $|A|$. 

Let $\mathbb{N}_0$ denote the set of all non-negative integers and $\mathcal{P}(\mathbb{N}_0)$ be its power set. For all $A, B \subseteq \mathbb{N}_0$, the {\em sum set} of $A$ and $B$ is denoted by  $A+B$ and is defined as $A + B = \{a+b: a \in A, b \in B\}$.

\begin{definition}\label{D2}{\rm
\cite{GA} An {\em integer additive set-indexer} (IASI, in short) is defined as an injective function $f:V(G)\rightarrow \mathcal{P}(\mathbb{N}_0)$ such that the induced function $f^+:E(G) \rightarrow \mathcal{P}(\mathbb{N}_0)$ defined by $f^+ (uv) = f(u)+ f(v)$ is also injective.  A graph $G$ which admits an IASI is called an IASI graph.}
\end{definition}

\begin{definition}\label{D3}{\rm
The cardinality of the labeling set of an element (vertex or edge) of a graph $G$ is called the {\em set-indexing number} of that element.}
\end{definition}

In \cite{GS2}, the vertex set $V$ of a graph $G$ is defined to be {\em $l$-uniformly set-indexed}, if all the vertices of $G$ have the set-indexing number $l$.

By the term, an arithmetically progressive set, (AP-set, in short), we mean a set whose elements are in arithmetic progression. The common difference of the set-label of an element of $G$ is called the {\em deterministic index} of that element. 

\begin{definition}{\rm
\cite{GS7} An {\em arithmetic integer additive set-indexer} is an integer additive set-indexer $f$, under which the set-labels of all elements of a given graph $G$ are the sets whose elements are in arithmetic progressions. A graph that admits an arithmetic IASI is called an {\em arithmetic IASI graph}.

If all vertices of $G$ are labeled by the sets consisting of arithmetic progressions, but the set-labels of edges are not arithmetic progressions, then the corresponding IASI may be called {\em semi-arithmetic IASI}.}
\end{definition}

\begin{theorem}\label{T-AIASI-g}
\cite{GS7} A graph $G$ admits an arithmetic IASI if and only if for any two adjacent vertices in $G$, the deterministic index of one vertex is a positive integral multiple of the deterministic index of the other vertex and this positive integer is less than or equal to the cardinality of the set-label of the latter vertex.
\end{theorem}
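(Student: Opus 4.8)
The plan is to characterize when two adjacent vertices $u$ and $v$, carrying AP-set labels, produce an edge whose label $f(u)+f(v)$ is again an AP-set. Let me set up notation: suppose $f(u)$ is an AP with common difference $d_u$ and cardinality $m$, so $f(u) = \{a, a+d_u, \dots, a+(m-1)d_u\}$, and similarly $f(v)$ is an AP with common difference $d_v$ and cardinality $n$. The sum set $f(u)+f(v)$ consists of all the values $a+b+id_u+jd_v$ for $0\le i\le m-1$ and $0\le j\le n-1$. The heart of the matter is to determine exactly when this collection of sums, once sorted and deduplicated, forms an arithmetic progression.

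First I would reduce to the case where one common difference divides the other. Without loss of generality assume $d_u \le d_v$, and write the sum set as the union of $n$ translated copies of $f(u)$, namely $\bigcup_{j=0}^{n-1} \big(f(u) + (b+jd_v)\big)$. Each translate is an AP of common difference $d_u$ spanning a total length $(m-1)d_u$, and consecutive translates are offset by $d_v$. The key observation is that the union of these shifted copies is itself an AP of common difference $d_u$ precisely when the copies mesh together with no gaps and no irregular overlaps; this happens exactly when $d_v$ is an integer multiple of $d_u$, say $d_v = k\,d_u$, and the offset $k\,d_u$ between successive copies is no larger than the full span $(m-1)d_u$ plus one step $d_u$, i.e.\ $k \le m$. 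This is precisely the stated condition: the larger deterministic index is a positive integral multiple of the smaller, and the multiplying integer $k$ is at most the cardinality $m$ of the set-label of the vertex with the smaller index.

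For the forward direction (necessity), I would argue contrapositively: if $d_v$ is not a multiple of $d_u$, then the smallest nonzero gaps appearing within a single translate (which equal $d_u$) and the gaps created at the junctions between translates (governed by $d_v \bmod d_u$) cannot all agree, so the sorted sum set exhibits at least two distinct consecutive differences and hence is not an AP. Similarly, if $d_v = k\,d_u$ but $k > m$, then after the last element $a+(m-1)d_u$ of one translate and before the first element of the next translate, there is a gap of size $(k-m+1)d_u > d_u$, breaking the uniformity. For the reverse direction (sufficiency), assuming $d_v = k\,d_u$ with $1\le k \le m$, I would verify directly that sorting the union produces consecutive differences all equal to $d_u$, so the edge label is a genuine AP; a short indexing argument shows every integer in the range $[a+b,\ a+b+(m-1)d_u+(n-1)d_v]$ of the correct residue class is hit, with no gaps.

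The main obstacle I anticipate is handling the overlap bookkeeping cleanly in the sufficiency direction: when $k < m$ the successive translates genuinely overlap, and when $k=m$ they abut exactly, so one must confirm in both regimes that the merged set has uniform step $d_u$ and that repeated sums collapse to the same values rather than disrupting the progression. I would manage this by indexing each sum-set element canonically as $a+b + (i + kj)\,d_u$ and observing that as $(i,j)$ ranges over $\{0,\dots,m-1\}\times\{0,\dots,n-1\}$, the quantity $i+kj$ ranges over a contiguous block of integers $\{0,1,\dots,(m-1)+k(n-1)\}$ exactly when $k\le m$, which is the clean combinatorial fact underlying the whole equivalence.
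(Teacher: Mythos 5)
Your reduction of the theorem to a sum-set lemma is exactly the right move, and it is worth noting that this paper itself gives no proof to compare against: Theorem~\ref{T-AIASI-g} is quoted from the companion paper \cite{GS7}, so your argument stands or falls on its own. On its own it is essentially sound, and the sufficiency half is genuinely clean: writing each element canonically as $a+b+(i+kj)d_u$ and observing that $i+kj$ fills the contiguous block $\{0,1,\dots,(m-1)+k(n-1)\}$ exactly when $k\le m$ handles both the overlapping regime $k<m$ and the abutting regime $k=m$ in one stroke, and correctly identifies the gap of size $(k-m+1)d_u>d_u$ when $k>m$.

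The necessity direction for $d_u\nmid d_v$, however, is loose as written. Your picture of ``within-translate gaps of $d_u$'' versus ``junction gaps governed by $d_v \bmod d_u$'' breaks down when the translates interleave: the sorted sum set can then have consecutive differences \emph{smaller} than $d_u$ (with $d_u=2$, $d_v=3$, $f(u)=\{0,2,4\}$, $f(v)=\{0,3,6\}$ the merged set $\{0,2,3,4,5,6,7,8,10\}$ has unit gaps), so the consecutive differences are not the tidy two-family list your sketch suggests. The standard repair is short: since $\min(d_u,d_v)=d_u$ and $m\ge 3$, the two smallest elements of $f(u)+f(v)$ are $a+b$ and $a+b+d_u$, so if the sum set were an AP-set its common difference would be forced to equal $d_u$; then every element, in particular $a+b+d_v$, would be congruent to $a+b$ modulo $d_u$, giving $d_u\mid d_v$, after which your contiguity lemma supplies $k\le m$. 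You should make that congruence step explicit rather than arguing about gap agreement. A second, smaller omission: the theorem asserts that $G$ \emph{admits} an arithmetic IASI, and by Definition~\ref{D2} an IASI requires $f$ and $f^+$ to be injective, so the edge-local equivalence must be supplemented by the (easy but necessary) remark that vertex labels satisfying the index conditions can be chosen pairwise distinct with pairwise distinct sum sets, e.g.\ by varying initial terms; conversely any arithmetic IASI restricted to a single edge meets the hypotheses of your lemma, which gives the forward implication at the graph level. Neither fix is deep, but both are needed for the statement as it is actually phrased.
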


\begin{proposition}\label{P-AIASI-1}
If the set-labels of both the end vertices of an edge have the same deterministic indices, say $d$, then the deterministic index of that edge is also $d$.
\end{proposition}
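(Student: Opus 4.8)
The plan is to compute the sum set of the two vertex labels directly and to recognise it as an arithmetic progression with common difference $d$. First I would write the set-labels of the end vertices $u$ and $v$ of the edge explicitly as AP-sets sharing the common difference $d$; say $f(u)=\{a+id: 0\le i\le m-1\}$ and $f(v)=\{b+jd: 0\le j\le n-1\}$, where $m=|f(u)|$ and $n=|f(v)|$. By the definition of the induced edge function (Definition \ref{D2}), the set-label of the edge $uv$ is the sum set $f^+(uv)=f(u)+f(v)$, so the whole matter reduces to understanding the structure of this sum set.

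Next I would form a typical element of the sum set and factor out $d$:
\begin{equation*}
(a+id)+(b+jd)=(a+b)+(i+j)d,
\end{equation*}
so that every element of $f^+(uv)$ has the form $(a+b)+kd$ with $k=i+j$. The crux is then to determine exactly which values $k$ assumes. Since $i$ ranges over $\{0,1,\ldots,m-1\}$ and $j$ over $\{0,1,\ldots,n-1\}$, the sum $k=i+j$ lies in $\{0,1,\ldots,m+n-2\}$, and every value in this range is attained (for a given $k$ one may take $i=\min(k,m-1)$ and $j=k-i$, which always satisfies the required bounds). Consequently
\begin{equation*}
f^+(uv)=\{(a+b)+kd: 0\le k\le m+n-2\},
\end{equation*}
which is precisely an AP-set with first term $a+b$, common difference $d$ and $m+n-1$ terms. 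Since the deterministic index of an edge is the common difference of its set-label, this yields the deterministic index $d$ for $uv$, as claimed. Note also that this is the ratio-one instance of Theorem \ref{T-AIASI-g}, so the hypotheses for an arithmetic IASI are automatically met.

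The only point requiring genuine care — the ``obstacle,'' such as it is — is verifying that the index $k=i+j$ sweeps through the consecutive integers $0,1,\ldots,m+n-2$ without leaving any gaps, for it is exactly this no-gap property that guarantees the sum set is a bona fide arithmetic progression rather than merely a subset of one with some intermediate terms missing. Everything else is a routine rewriting; so in the write-up I would state the two vertex labels, perform the one-line factorisation above, and spend the bulk of the argument justifying the attainability of each $k$ in the stated range.
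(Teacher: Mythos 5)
Your proof is correct, and in fact the paper offers no proof of Proposition \ref{P-AIASI-1} at all --- it is stated as a bare assertion --- so your computation supplies exactly the argument the authors leave implicit. Writing $f(u)=\{a+id: 0\le i\le m-1\}$, $f(v)=\{b+jd: 0\le j\le n-1\}$ and checking that $k=i+j$ attains every value in $\{0,1,\ldots,m+n-2\}$ (your choice $i=\min(k,m-1)$, $j=k-i$ works) is the standard sumset-of-progressions argument, and you are right that the no-gap verification is the only substantive step: without it one only knows $f^+(uv)$ is a subset of an AP-set. As a consistency check, your conclusion $|f^+(uv)|=m+n-1$ is precisely the cardinality $l=m_i+n_j-1$ that appears in Theorem \ref{T-CUIASI2}, so your argument dovetails with the rest of the paper.
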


\begin{definition}{\rm
\cite{GS8} If the set-labels of all elements of a graph $G$ consist of arithmetic progressions with the same common difference $d$, then the corresponding IASI is called {\em isoarithmetic IASI}. That is, an arithmetic IASI of a graph $G$ is an isoarithmetic IASI if all elements of $G$ have the same deterministic index.}
\end{definition}

\begin{definition}{\rm
\cite{GS8} An arithmetic IASI $f$ of a graph $G$, under which the deterministic indices $d_i$ and $d_j$ of two adjacent vertices $v_i$ and $v_j$ respectively of $G$, holds the conditions $d_j=kd_i$ where $k$ is a non-negative integer such that $1< k \le |f(v_i)|$, is called {\em biarithmetic IASI}. If the value of $k$ is unique for all pairs of adjacent vertices of a biarithmetic IASI graph $G$, then that biarithmetic IASI is called {\em identical biarithmetic IASI} and $G$ is called an {\em identical biarithmetic IASI graph}.} 
\end{definition}

As we study the graphs, the set-labels of whose elements are AP-sets, all sets we consider in this discussion consists of at least three elements which are in ascending order.

In this paper, we investigate the admissibility of arithmetic integer additive set-indexers by certain graphs that are associated to a given graph $G$ and establish some results on arithmetic IASIs.

\section{Isoarithmetic IASIs of Associated Graphs}

In the following discussions, we study admissibility of isoarithmetic IASIs and biarithmetic IASIs by certain graphs associated to a given arithmetic IASI graph.

Throughout this section, we denote the set-label of a vertex $v_i$ of a given graph $G$ by $A_i$, which is a set of non-negative integers.  

\begin{proposition}
Let $G$ be an isoarithmetic IASI graph. Then, any non-trivial subgraph of $G$ is also an isoarithmetic IASI Graph.
\end{proposition}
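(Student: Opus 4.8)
The plan is to show that an isoarithmetic IASI, when restricted to any subgraph, remains an isoarithmetic IASI. The key observation is that being isoarithmetic is a strong uniformity condition: every element of $G$ — vertex and edge — carries a set-label that is an AP-set with one and the same common difference $d$. So I would begin by letting $f$ be the isoarithmetic IASI on $G$ and letting $H$ be any non-trivial subgraph, and I would define the candidate labeling $g = f|_{V(H)}$, the restriction of $f$ to the vertices of $H$.

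First I would verify that $g$ is itself an IASI on $H$, which is essentially inherited for free. Injectivity of $g$ on $V(H)$ follows immediately from injectivity of $f$ on $V(G) \supseteq V(H)$, since a restriction of an injective function is injective. For the induced edge function $g^+$, note that every edge of $H$ is also an edge of $G$, and on such an edge $uv$ we have $g^+(uv) = g(u)+g(v) = f(u)+f(v) = f^+(uv)$; hence $g^+$ is just the restriction of $f^+$ to $E(H)$, and again restricting an injective function preserves injectivity. So $(H,g)$ is an IASI graph.

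Next I would confirm the isoarithmetic property, which is the heart of the matter. By definition of isoarithmetic IASI, every vertex and every edge of $G$ has deterministic index exactly $d$. Since $V(H)\subseteq V(G)$ and $E(H)\subseteq E(G)$, every element of $H$ is an element of $G$ and therefore already carries a set-label that is an AP-set with common difference $d$. In particular, each vertex of $H$ has deterministic index $d$, and by Proposition~\ref{P-AIASI-1} (the two end vertices of any edge of $H$ share the common difference $d$, so the edge has deterministic index $d$ too) every edge of $H$ likewise has deterministic index $d$. Thus all elements of $H$ have the same deterministic index $d$, which is precisely the defining condition for $g$ to be an isoarithmetic IASI.

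I do not anticipate a genuine technical obstacle here, since the whole argument is one of inheritance-by-restriction; the only point requiring mild care is the non-triviality hypothesis on $H$. That hypothesis guarantees $H$ has at least one edge, so there is something to check for the induced edge labeling and the statement is not vacuous; it also keeps us within the paper's standing convention that graphs have no isolated vertices. I would close by remarking that the same restriction argument shows any non-trivial subgraph of a biarithmetic (indeed, of any arithmetic) IASI graph inherits the corresponding property, since the governing conditions of Theorem~\ref{T-AIASI-g} are conditions on adjacent pairs of vertices and edges that persist under passing to a subgraph.
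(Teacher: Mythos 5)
Your proposal is correct and takes essentially the same approach as the paper: the paper's proof is exactly the restriction argument, observing that $f|_H$ is an induced isoarithmetic IASI on $H$. Your version merely spells out the details (injectivity of the restricted vertex and edge functions, and the persistence of the common deterministic index $d$) that the paper leaves implicit.
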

\begin{proof}
let $f$ be an arithmetic IASI on $G$ and let $H\subset G$. The proof follows from the fact that the restriction $f|_H$ of $f$ to the subgraph $H$ is an induced isoarithmetic IASI on $H$.
\end{proof}

\begin{definition}{\rm
By {\em edge contraction operation} in $G$, we mean an edge, say $e$, is removed and its two incident vertices, $u$ and $v$, are merged into a new vertex $w$, where the edges incident to $w$ each correspond to an edge incident on either $u$ or $v$.}
\end{definition}

We establish the following theorem for the graphs obtained by contracting the edges of a given graph $G$. The following theorem verifies the admissibility of the graphs obtained by contracting the edges of a given isoarithmetic IASI graph $G$.

\begin{theorem}
Let $G$ be an isoarithmetic  IASI graph and let $e$ be an edge of $G$. Then, $G\circ e$ admits an isoarithmetic  IASI.
\end{theorem}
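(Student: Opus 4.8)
The plan is to construct an explicit isoarithmetic IASI on $G\circ e$ by keeping the original labels on all untouched vertices and assigning a carefully chosen fresh label to the merged vertex. Write $e=uv$, let $w$ denote the new vertex of $G\circ e$ obtained by merging $u$ and $v$, and let $f$ be the given isoarithmetic IASI on $G$, so that every $A_i=f(v_i)$ is an AP-set with common difference $d$. I would first record the elementary fact underlying Proposition~\ref{P-AIASI-1}: the sum set of two AP-sets with the same common difference $d$ is again an AP-set with common difference $d$, its least element being the sum of the least elements and its cardinality being $|A_i|+|A_j|-1$. This guarantees that, whatever labels I place on the vertices, every induced edge label is automatically an AP-set with common difference $d$. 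Hence the isoarithmetic requirement comes for free once I produce a valid IASI whose vertex labels are AP-sets with common difference $d$.

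Next I would define the labeling $g$ on $G\circ e$ by $g(v_i)=f(v_i)$ for every vertex $v_i\notin\{u,v\}$, and $g(w)=\{c,c+d,c+2d\}$ (or any AP-set with common difference $d$ and at least three terms), where $c$ is chosen strictly larger than $2M$ with $M=\max_{v_i\in V(G)}\min f(v_i)$. The edges of $G\circ e$ split into two types: type~(a), those $pq$ with $p,q\notin\{u,v\}$, whose $g$-labels equal their old $f$-labels; and type~(b), those $wx$ arising from an edge of $G$ incident to $u$ or $v$, whose labels are $g(w)+f(x)$. Because two AP-sets sharing the common difference $d$ coincide if and only if they have the same least element and the same cardinality, injectivity can be checked purely through least elements and cardinalities.

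The verification then proceeds in the natural cases. Vertex injectivity holds because $\min g(w)=c>2M\ge\min f(v_i)$ separates $g(w)$ from every old label. For two edges of type~(b), an equality $g(w)+f(x)=g(w)+f(y)$ would force $f(x)=f(y)$ and hence $x=y$, so distinct neighbours give distinct labels. For type~(a) versus type~(b), every type~(b) label has least element $c+\min f(x)>2M$, whereas every type~(a) label has least element at most $2M$, so the two families are disjoint; and the type~(a) labels are pairwise distinct since $f$ was already an IASI. This shows $g$ is an IASI, and by the opening observation it is isoarithmetic.

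The main obstacle I anticipate is not the arithmetic but the combinatorial bookkeeping forced by working with simple graphs. Contracting $e$ can create parallel edges (when $u$ and $v$ share a common neighbour $x$, both $ux$ and $vx$ collapse to $wx$) and could in principle create a loop. I would note that a loop cannot occur, since $e$ is the unique $u$--$v$ edge and is itself removed, and that each surviving parallel class is represented by a single edge $wx$ carrying the well-defined label $g(w)+f(x)$, so the merge introduces no injectivity conflict. Settling this identification is the one place where genuine care is needed; once it is in place, the large-$c$ choice renders every remaining inequality transparent.
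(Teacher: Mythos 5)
Your proof is correct, but it takes a genuinely different route from the paper at the one point where the argument has real content: the choice of label for the merged vertex. The paper, like you, keeps the old labels on all untouched vertices, but it labels the new vertex $w$ with the set-label of the contracted edge itself, $f(u)+f(v)$, which by Proposition \ref{P-AIASI-1} is an AP-set with the same common difference $d$, and then simply observes that every edge incident on $w$ again receives an AP-set label with difference $d$. That choice is canonical and requires no auxiliary constants, but the paper never verifies injectivity: nothing a priori prevents $f(u)+f(v)$ from coinciding with the label of some surviving vertex, nor the new edge labels $f(u)+f(v)+f(x)$ from colliding with old edge labels, so the paper's proof establishes the isoarithmetic property of the labels while leaving the IASI property implicit. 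Your fresh-translate construction $g(w)=\{c,c+d,c+2d\}$ with $c>2M$ trades that canonicity for a transparent injectivity check via least elements and cardinalities, which is legitimate precisely because all labels in sight are AP-sets with the same difference $d$ (and because the sum of two such sets has least element the sum of the least elements and cardinality $|A|+|B|-1$). Your explicit treatment of the parallel edges and potential loops created by contraction is likewise more careful than anything in the paper, which passes over this silently. In short, both proofs share the same skeleton — relabel only the merged vertex by an AP-set of difference $d$ and leave everything else alone — but your version closes verification gaps that the paper's own proof leaves open.
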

\begin{proof}
Let $G$ admits an  isoarithmetic IASI. Let $e$ be an edge in $E(G)$, the deterministic index of whose end vertices is $d$, where $d$ is a positive integer. Since $G$ is isoarithmetic IASI graph, the set set-label of each edge of $G$ is also an AP-set with the same common difference $d$. $G\circ e$ is the graph obtained from $G$ by deleting the edge $e$ of $G$ and identifying the end vertices of $e$. Label the new vertex thus obtained, say $w$, by the set-label of the deleted edge $e$. Then, each edge incident upon $w$ has a set-label which is also an AP-set with the same common difference $d$. Hence, $G\circ e$ is an isoarithmetic IASI graph. 
\end{proof}

\begin{definition}{\rm
\cite{KDJ} Let $G$ be a connected graph and let $v$ be a vertex of $G$ with $d(v)=2$. Then, $v$ is adjacent to two vertices $u$ and $w$ in $G$. If $u$ and $w$ are non-adjacent vertices in $G$, then delete $v$ from $G$ and add the edge $uw$ to $G-\{v\}$. This operation is known as an {\em elementary topological reduction} on $G$.}
\end{definition}

\begin{theorem}
Let $G$ be a graph which admits an isoarithmetic IASI. Then, any graph $G'$, obtained by applying a finite number of elementary topological reductions on $G$, also admits an isoarithmetic IASI. 
\end{theorem}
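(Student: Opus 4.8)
The plan is to proceed by induction on the number of elementary topological reductions applied to $G$, so that it suffices to prove that a \emph{single} elementary topological reduction preserves the isoarithmetic property; the output of each stage (a connected isoarithmetic IASI graph, since removing a degree-$2$ vertex and bridging its neighbours keeps the graph connected) then serves as a legitimate input for the next reduction, and the general statement follows after finitely many steps. So fix an isoarithmetic IASI $f$ on $G$ with common deterministic index $d$, let $v$ be a vertex with $d(v)=2$ adjacent to the non-adjacent vertices $u$ and $w$, and let $G'=(G-\{v\})+uw$ be the reduced graph.

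The natural candidate is the restriction $f'=f|_{V(G')}$, that is, retain every vertex label of $G$ except the one on the deleted vertex $v$. Under $f'$ every vertex of $G'$ still carries an AP-set with common difference $d$, and every edge of $G'$ inherited from $G$ keeps its old label, which is an AP-set with common difference $d$. The only element to check is the new edge $uw$. Since $u$ and $w$ both have deterministic index $d$, Proposition~\ref{P-AIASI-1} guarantees that $f'(u)+f'(w)=f(u)+f(w)$ is again an AP-set with common difference $d$. Hence every vertex and every edge of $G'$ has deterministic index $d$, so $f'$ satisfies the arithmetic and iso- requirements; what remains is to certify that $f'$ is genuinely an IASI.

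The main obstacle is the injectivity of the induced edge map $f'^{+}$. Injectivity on vertices is inherited for free, since $f'$ is a restriction of the injective $f$. Every edge label except that of $uw$ was already realised in $G$, so the only possible failure is a coincidence $f(u)+f(w)=f(x)+f(y)$ for some surviving edge $xy$; this cannot be read off from the injectivity of $f^{+}$ on $G$, because $uw\notin E(G)$. Note that the set-indexing number of $uw$ is forced to be $|f(u)|+|f(w)|-1$, which controls the cardinality but not the placement of the label.

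I expect to dispose of this by a genericity argument that exploits the residual freedom in the labeling: the common difference $d$ is forced, but the first term of each vertex AP-set is free. Translating the label $f(u)$ by a constant $c$ keeps it an AP-set with common difference $d$ and shifts every edge incident to $u$ by the same $c$, thereby preserving their mutual distinctness; for each of the finitely many remaining edges and vertices, only finitely many values of $c$ can produce a collision, so all but finitely many choices of $c$ make $f'$ a bona fide isoarithmetic IASI. Making this precise -- checking that a single translation simultaneously separates $uw$ from every surviving edge without reintroducing a clash elsewhere -- is the delicate step, and is where I would concentrate the effort.
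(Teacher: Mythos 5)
Your construction coincides with the paper's: restrict $f$ to $V(G')$, keep every surviving edge label, and give the new edge $uw$ the label $f(u)+f(w)$, which by Proposition~\ref{P-AIASI-1} is an AP-set with the common difference $d$. Where you diverge is in what happens next: the paper simply declares the resulting $f'$ to be an isoarithmetic IASI (``Clearly, $f'$ is an isoarithmetic IASI of $G'$''), with no mention of the injectivity of the induced edge map, whereas you correctly isolate the only nontrivial point, namely a possible collision $f(u)+f(w)=f(x)+f(y)$ with a surviving edge $xy$, which the injectivity of $f^{+}$ on $G$ does not exclude since $uw\notin E(G)$. Your worry is genuine: under an isoarithmetic labeling an edge label is determined by its minimum and its cardinality, so for instance $f(u)=\{0,1,2\}$ and $f(w)=\{5,6,7\}$ produce the same edge label $\{5,6,7,8,9\}$ as an existing edge with $f(x)=\{1,2,3\}$ and $f(y)=\{4,5,6\}$; the naive restriction can therefore fail to be an IASI, and the theorem (which asserts only that $G'$ admits \emph{some} isoarithmetic IASI) needs exactly the kind of relabeling you propose. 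Moreover, the step you flag as delicate is in fact routine to close: replacing $f(u)$ by $f(u)+c$ keeps every label an AP-set with difference $d$; the labels of edges at $u$ all shift by the same $c$ and so remain mutually distinct; a collision between a shifted label $A+c$ and a fixed label $B$ forces $c=\min B-\min A$, so each of the finitely many vertex or edge pairs forbids at most one value of $c$; and a collision of $uw$ with another edge $uy$ at $u$ is impossible for every $c$, since equality of the two AP-sets would force $\min f(w)=\min f(y)$ and $|f(w)|=|f(y)|$, hence $f(w)=f(y)$, contradicting the injectivity of $f$. So your route is the paper's construction augmented by a repair of a gap that the paper's proof conceals, together with an explicit induction (and the connectivity remark needed so that each reduced graph is a legitimate input for the next reduction) that the paper leaves implicit; once the one-step case is completed as above, your argument is fully correct and strictly more careful than the published one.
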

\begin{proof}
Let $G$ be a graph which admits an isoarithmetic IASI, say $f$. Then, all the elements of $G$ are labeled by AP-sets having the same common difference $d$, where $d$ is a positive integer. Let $v$ be a vertex of $G$ with $d(v)=2$. Then, $v$ is adjacent to two non-adjacent vertices $u$ and $w$ in $G$. Now remove the vertex $v$ from $G$ and introduce the edge $uw$ to $G-{v}$. Let $G'=(G-{v})\cup \{uw\}$. Now $V(G')\subset V(G)$. Let $f':V(G')\to \mathcal{P}(\mathbb{N}_0)$ such that $f'(v)=f(v)~ \forall ~v\in V(G')$ (or $V(G)$) and the associated function $f'^+:E(G')\to \mathcal{P}(\mathbb{N}_0)$ defined by 
\[ f'^+(e)= \left\{
\begin{array}{l l}	
f^+(e)& \quad \text{if $e\ne uw$}\\
f(u)+f(w)& \quad \text{if $e=uw$}
\end{array} \right.\]
Clearly, $f'$ is an isoarithmetic IASI of $G'$. 
\end{proof}

Another associated graph of a given graph $G$ is its graph subdivision. The notion of a graph subdivision is given below and its admissibility of arithmetic IASI are established in the following theorem.

\begin{definition}{\rm
\cite{RJT} A {\em subdivision} of a graph $G$ is the graph obtained by adding vertices of degree two into some or all of its edges.}
\end{definition}

\begin{theorem}
The graph subdivision $G^{\ast}$ of an isoarithmetic IASI graph $G$ also admits an isoarithmetic IASI.
\end{theorem}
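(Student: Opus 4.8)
The plan is to extend the given isoarithmetic IASI of $G$ to the subdivision $G^{\ast}$, so that only the newly inserted vertices require fresh labels. Let $f$ be an isoarithmetic IASI of $G$ with common deterministic index $d$; thus every vertex and every edge of $G$ carries an AP-set of common difference $d$. Recall that $G^{\ast}$ is obtained by inserting into certain edges $uv$ of $G$ a new vertex $x_{uv}$ of degree two, so that the edge $uv$ is replaced by the two edges $ux_{uv}$ and $x_{uv}v$. I would define $f^{\ast}$ to coincide with $f$ on every original vertex, so that each surviving edge keeps its old AP-set label, and then assign to every subdivision vertex $x_{uv}$ a set-label that is again an AP-set of common difference $d$.

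The main structural observation is that, under this scheme, the isoarithmetic property comes for free. Both end vertices $u$ and $x_{uv}$ of the new edge $ux_{uv}$ carry AP-sets of the same deterministic index $d$, so Proposition~\ref{P-AIASI-1} guarantees that $f^{\ast +}(ux_{uv})=f(u)+f^{\ast}(x_{uv})$ is itself an AP-set of common difference $d$, and the same holds for $x_{uv}v$. Consequently every element of $G^{\ast}$ is labeled by an AP-set with the one common difference $d$, and the entire content of the theorem collapses to verifying that $f^{\ast}$ is a valid IASI, i.e. that both $f^{\ast}$ and its induced edge map $f^{\ast +}$ are injective.

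I expect this injectivity to be the only real obstacle, and I would dispose of it using the infinitude of $\mathbb{N}_0$. Since $G$ has finitely many elements, only finitely many AP-sets appear among its labels; hence I may choose the starting terms of the new progressions $f^{\ast}(x_{uv})$ to be pairwise distinct and strictly larger than every integer occurring in a label of $G$, which makes the vertex map $f^{\ast}$ injective at once. For the edges, observe that the sum set $f(u)+f^{\ast}(x_{uv})$ has least element $\min f(u)+\min f^{\ast}(x_{uv})$; by spreading the starting terms $\min f^{\ast}(x_{uv})$ far enough apart and above the range of all original edge labels, these least elements are forced to be mutually distinct and distinct from the least elements of every surviving edge label, whence the corresponding sum sets are pairwise distinct. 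The two edges $ux_{uv}$ and $x_{uv}v$ born from a single subdivision receive different labels because $f(u)\neq f(v)$ by the injectivity of $f$. Collecting these facts shows that $f^{\ast}$ is an isoarithmetic IASI of $G^{\ast}$.
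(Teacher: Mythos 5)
Your proposal is correct, but it takes a genuinely different route from the paper's proof. The paper labels each inserted vertex $w$ on a subdivided edge $uv$ with that edge's own set-label $f^{+}(uv)=f(u)+f(v)$, which is already an AP-set with common difference $d$; the two replacement edges $uw$ and $wv$ then automatically carry AP-set labels of difference $d$, and the paper stops there without addressing injectivity at all. You instead assign each subdivision vertex a \emph{fresh} AP-set of difference $d$ with large, widely separated initial terms, invoke Proposition \ref{P-AIASI-1} for the AP property of the new edge labels, and then explicitly verify that both $f^{\ast}$ and $f^{\ast+}$ are injective. What your approach buys is precisely this last point, which the paper silently skips: under the paper's scheme the new vertex label $f(u)+f(v)$ could in principle coincide with the label of an original vertex of $G$, so your extra care with starting terms is not wasted, and your argument is the more complete of the two. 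One step you should justify more fully: from $f(u)\neq f(v)$ alone one cannot conclude $f(u)+A\neq f(v)+A$, since sumset cancellation fails for arbitrary finite sets. It does hold here, but only because $f(u)$, $f(v)$ and $A=f^{\ast}(x_{uv})$ are all APs with the same common difference $d$: each sum set is then the full arithmetic progression with difference $d$ running from the sum of the minima to the sum of the maxima, and two distinct APs with the same difference must differ in their minimum or their maximum, forcing the two sum sets apart. With that sentence added, your proof is complete and, on the injectivity question, strictly more careful than the paper's.
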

\begin{proof}
Let $u$ and $v$ be two adjacent vertices in $G$. Since $G$ admits an isoarithmetic IASI, the set-labels of two vertices $u$, $v$ and the edge $uv$ of $G$ are AP-sets with the common difference $d$, where $d$ is a positive integer. Introduce a new vertex $w$ to the edge $uv$. Now, we have two new edges $uw$ and $vw$ in place of $uv$. Extend the set-labeling of $G$ by labeling the vertex $w$ by the same set-label of the edge $uv$. Then, both the edges $uw$ and $vw$ have the set-labels which are AP-sets with the same common difference $d$. Hence, $G^{\ast}$ admits an isoarithmetic IASI. 
\end{proof}

Recall the following definition of line graph of a graph.

\begin{definition}{\rm
\cite{DBW} For a given graph $G$, its line graph $L(G)$ is a graph such that  each vertex of $L(G)$ represents an edge of $G$ and two vertices of $L(G)$ are adjacent if and only if their corresponding edges in $G$ are incident on a common vertex in $G$.}
\end{definition}

An interesting question we need to address here is whether the line graph of an isoarithmetic IASI graph admits an isoarithmetic IASI. The following theorem answers this question.

\begin{theorem}
If $G$ is an isoarithmetic IASI graph, then its line graph $L(G)$ is also an isoarithmetic IASI graph.
\end{theorem}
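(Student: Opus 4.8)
The plan is to carry the labeling of $G$ over to its line graph by assigning to each vertex of $L(G)$ the set-label that its corresponding edge already carries in $G$. Precisely, if $f$ is the given isoarithmetic IASI of $G$, with common difference $d$, I would define $g:V(L(G))\to\mathcal{P}(\mathbb{N}_0)$ by $g(e)=f^+(e)$ for every edge $e$ of $G$ viewed as a vertex of $L(G)$. The first thing to verify is that this produces the correct deterministic indices. Since $G$ is isoarithmetic, for each edge $e=uv$ both end vertices have deterministic index $d$, so by Proposition~\ref{P-AIASI-1} the label $f^+(e)=f(u)+f(v)$ is itself an AP-set with common difference $d$; hence every vertex of $L(G)$ is labeled, under $g$, by an AP-set of common difference $d$.

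Next I would treat the edges of $L(G)$. Two vertices $e_1,e_2$ of $L(G)$ are adjacent exactly when the edges $e_1,e_2$ meet at a common vertex of $G$, and in that case $g(e_1)$ and $g(e_2)$ both have deterministic index $d$. Theorem~\ref{T-AIASI-g} is then satisfied trivially, with multiplying factor $k=1\le|g(e_2)|$, and applying Proposition~\ref{P-AIASI-1} once more shows the induced edge-label $g^+(e_1e_2)=g(e_1)+g(e_2)$ is again an AP-set with common difference $d$. At this point every element of $L(G)$ carries an AP-set with the single common difference $d$, which is exactly the defining feature of an isoarithmetic IASI, so the deterministic-index part of the argument is complete.

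What remains, and what I expect to be the real obstacle, is injectivity. Injectivity of $g$ on $V(L(G))$ is immediate, since distinct edges of $G$ have distinct labels because $f^+$ is injective by the IASI hypothesis on $G$. The delicate point is injectivity of the induced edge map $g^+$: the sum of two AP-sets of common difference $d$ is an AP-set of common difference $d$ determined solely by its least element and its number of terms, so two distinct edges $\{e_1,e_2\}$ and $\{e_3,e_4\}$ of $L(G)$ collapse to the same label precisely when their least elements and their cardinalities agree in sum, and the bare hypothesis does not forbid this (for instance among several edges of $G$ sharing one vertex). To close the argument I would use the slack still available in the labels: replacing each $g(e)$ by its translate $g(e)+\{c_e\}$ leaves it an AP-set of common difference $d$ but shifts its least element by $c_e$, and choosing the nonnegative constants $c_e$ to grow fast enough that all the pairwise sums $c_{e_i}+c_{e_j}$ arising over edges of $L(G)$ are distinct and dominate the bounded discrepancies among the original least elements forces both $g$ and $g^+$ to be injective. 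Verifying that such a choice always exists for the finite edge set of $L(G)$ is the crux; once it is in hand, $L(G)$ admits an isoarithmetic IASI and the theorem follows.
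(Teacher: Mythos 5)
Your construction coincides with the paper's: both proofs label each vertex of $L(G)$ with the set-label $f^+(e)$ of its corresponding edge $e$ in $G$, note that these are AP-sets with the common difference $d$, and conclude via Proposition~\ref{P-AIASI-1} that the induced edge labels of $L(G)$ are again AP-sets with difference $d$. Where you genuinely depart from the paper is in the final third of your argument: the paper stops at this deterministic-index computation and never addresses injectivity of the induced edge map, whereas you correctly identify it as the delicate point --- and your worry is substantive, not hypothetical. Take $G=K_{1,4}$ with center labeled $A=\{a,a+d,\dots,a+(n-1)d\}$ and leaves labeled $B_i=\{b_i,b_i+d,\dots,b_i+(n-1)d\}$ with $(b_1,b_2,b_3,b_4)=(0,3,1,2)$ and $a$ large; then $f$ and $f^+$ are injective, but in $L(K_{1,4})=K_4$ the edges $\{e_1,e_2\}$ and $\{e_3,e_4\}$ both receive the label $2A+B_1+B_2=2A+B_3+B_4$ (same least element $2a+3$, same cardinality $4n-3$, same difference $d$), so the naive transfer fails to be an IASI and the published proof, read literally, is incomplete. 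Your translation repair closes this hole while preserving every deterministic index: shifting $g(e)$ by $c_e$ keeps it an AP-set with difference $d$, and since an AP-set with difference $d$ is determined by its least element and cardinality, it suffices to make the relevant least elements pairwise distinct. The existence step you defer as ``the crux'' is in fact routine for the finite graph $L(G)$: enumerate the edges of $G$ as $e_0,\dots,e_{m-1}$, put $c_{e_i}=M\cdot 4^i$ with $M$ exceeding twice the largest least element among the original labels, and observe that distinct pairs $\{i,j\}$ give distinct values $4^i+4^j$ (base-$4$ representation), so the shifted least elements of all vertex labels, and of all edge labels of $L(G)$, are pairwise distinct; both $g$ and $g^+$ are then injective. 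In sum, you prove the theorem by the same labeling as the paper but supply a correctness patch the paper's own proof lacks; writing out the base-$4$ choice explicitly would make your argument fully rigorous.
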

\begin{proof}
Since $G$ is an isoarithmetic IASI graph, the elements of $G$ have the set-labels whose elements are in arithmetic progression with the same common difference, say $d$, where $d$ is a positive integer. Label each vertex of $L(G)$ by the same set-label of its corresponding edge in $G$. Hence, the set-labels of all vertices in $L(G)$ are AP-sets with the same common difference $d$. Therefore, the set-labels of all edges of $L(G)$ are  also AP-sets with the same common difference $d$. That is, $L(G)$ is also an isoarithmetic graph.
\end{proof}

\begin{definition}{\rm
\cite{MB} The {\em total graph} of a graph $G$, denoted by $T(G)$, is the graph having the property that a one-to one correspondence can be defined between its points and the elements (vertices and edges) of $G$ such that two points of $T(G)$ are adjacent if and only if the corresponding elements of $G$ are adjacent (if both elements are edges or if both elements are vertices) or they are incident (if one element is an edge and the other is a vertex). }
\end{definition} 

\begin{theorem}
If $G$ is an isoarithmetic IASI graph, then its total graph $T(G)$ is also an isoarithmetic IASI graph.
\end{theorem}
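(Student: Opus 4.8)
The plan is to extend the labelling of $G$ to $T(G)$ in the same spirit as the line-graph and total-graph constructions do structurally. Recall that the vertices of $T(G)$ are in one-to-one correspondence with the elements of $G$, i.e. with $V(G)\cup E(G)$. Since $G$ admits an isoarithmetic IASI $f$, every vertex and every edge of $G$ carries a set-label which is an AP-set with the same common difference $d$. I would therefore define a labelling $g$ on $V(T(G))$ by assigning to each vertex-type vertex of $T(G)$ the label that $f$ gives to the corresponding vertex of $G$, and to each edge-type vertex of $T(G)$ the induced label $f^{+}$ of the corresponding edge of $G$. In this way every vertex of $T(G)$ receives an AP-set whose common difference is $d$.

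Next I would check the arithmetic structure of the edges of $T(G)$. An edge of $T(G)$ arises from exactly one of the three adjacency types in $G$ (two adjacent vertices, two adjacent edges, or an incident vertex--edge pair), but in every one of these cases both of its end vertices carry deterministic index $d$ under $g$. Hence, by Proposition \ref{P-AIASI-1}, the induced set-label $g^{+}$ of each edge of $T(G)$ is again an AP-set with common difference $d$. Consequently all elements of $T(G)$ share the single common difference $d$, which is precisely the isoarithmetic condition, and the multiplicity requirement of Theorem \ref{T-AIASI-g} is satisfied trivially with multiple $1$. This part of the argument is immediate and carries essentially no computational content.

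The genuinely delicate point, and the step I expect to be the main obstacle, is verifying that $g$ is a bona fide IASI, that is, that $g$ is injective on $V(T(G))$ and that $g^{+}$ is injective on $E(T(G))$. Injectivity of $g$ among the vertex-type vertices follows from injectivity of $f$, and among the edge-type vertices from injectivity of $f^{+}$; what is \emph{not} automatic is that a vertex of $G$ and an edge of $G$ might receive the same set-label, since nothing in the definition of an IASI on $G$ forbids $f(v)=f^{+}(e)$. To resolve this I would exploit the freedom in the construction: for a fixed common difference $d$ and a fixed cardinality there are infinitely many AP-sets, obtained simply by translating the initial term, and every translate still has common difference $d$. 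Thus any coincidence between a vertex label and an edge label can be removed by re-translating the offending labels without disturbing the isoarithmetic property, and the same freedom, applied with more care, restores injectivity of $g^{+}$ as well. This bookkeeping is the only substantive content of the proof, the arithmetic-progression part being settled by the two paragraphs above.
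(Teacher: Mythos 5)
Your construction is exactly the paper's: label each vertex of $T(G)$ with the set-label of the corresponding element (vertex or edge) of $G$ under $f$, observe that all these labels are AP-sets with the same common difference $d$, and conclude that the induced edge labels of $T(G)$ are AP-sets with difference $d$ as well. The injectivity issue you single out as the delicate point is genuine, but the paper simply asserts it (``Clearly, $f'$ is injective'') without your translation-based repair, so your proposal is correct and essentially the same proof, with a patch for a step the paper glosses over.
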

\begin{proof}
Let the graph $G$ admits an isoarithmetic IASI, say $f$. Then, for any element (a vertex or an edge) $x$ of $G$, the set-label $f(x)$ is an AP-set of non-negative integers with the common difference, say $d$, $d$ being a positive integer. Define a map $f':V(T(G))\to \mathcal{P}(\mathbb{N}_0)$ which assigns the same set-labels of the corresponding elements in $G$ under $f$ to the vertices of $T(G)$. Clearly, $f'$ is injective and for each vertex $u_i$ in $T(G)$, $f'(u_i)$ is an AP-set with the same common difference $d$. Now, define the associated function $f^+:E(T(G))\to \mathcal{P}(\mathbb{N}_0)$ defined by $f'^+(u_iu_j)= f'(u_i)+f'(u_j),~ u_i,u_j\in V(T(G))$. Then, $f'^+$ is injective and each $f'^+(u_iu_j)$ is also an AP-set with the same common difference $d$. Therefore, $f'$ is an isoarithmetic IASI of $T(G)$. This completes the proof.
\end{proof}

\section{Biarithmetic IASI of Associated Graphs}


In this section, we discuss the admissibility of biarithmetic IASIs by the associated graphs of a given biarithmetic IASI graph.

\begin{theorem}\label{T-CUIASI2}
\cite{GS8} A biarithmetic IASI of a graph $G$ is an $l$-uniform IASI if and only if $G$ has $p$ bipartite components, where $p$ is the number of distinct pair $(m_i,n_j)$ of positive integers such that $m_i$ and $n_j$ are the set-indexing numbers of adjacent vertices in $G$ and $l=m_i+n_j-1$.
\end{theorem}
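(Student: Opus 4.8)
The plan is to reduce the whole equivalence to a single cardinality computation for the set-label of a biarithmetic edge, and then read off both implications from it. First I would record the basic fact: if $v_i,v_j$ are adjacent with deterministic indices $d_i$ and $d_j=k\,d_i$, where $1<k\le |f(v_i)|=m_i$ and $|f(v_j)|=n_j$, then writing $f(v_i)=\{a_i+r\,d_i:0\le r\le m_i-1\}$ and $f(v_j)=\{a_j+s\,k\,d_i:0\le s\le n_j-1\}$, the sum set is $f(v_i)+f(v_j)=\{a_i+a_j+t\,d_i:t\in T\}$ with $T=\{r+sk\}$. Since the biarithmetic hypothesis $k\le m_i$ forces the blocks $\{sk,\dots,sk+m_i-1\}$ to overlap or abut as $s$ increases, $T$ is a full run of consecutive integers, so the edge is again an AP (with deterministic index $d_i$) of set-indexing number $m_i+k(n_j-1)$; in the isoarithmetic limit $k=1$ this is precisely $m_i+n_j-1$. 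Thus the set-indexing number of every edge is pinned down by the ordered pair $(m_i,n_j)$ of end-vertex set-indexing numbers together with the ratio $k$, and being an $l$-uniform IASI is exactly the assertion that this quantity equals the single value $l$ on every edge.

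For the reverse implication I would begin from the hypothesis that $G$ is a disjoint union of $p$ bipartite components, each recorded by a pair $(m_i,n_j)$. On a component with bipartition $(X,Y)$ I would assign every vertex of $X$ deterministic index $d$ and set-indexing number $m_i$, and every vertex of $Y$ deterministic index $k\,d$ and set-indexing number $n_j$, choosing distinct initial terms so that $f$ remains injective. Every edge then joins a fine vertex of $X$ to a coarse vertex of $Y$, so by the lemma every edge carries the common value $l$, and injectivity of $f^+$ is secured by the same freedom in the initial terms. Carrying this out on all $p$ components simultaneously, with the data chosen so that each admissible pair yields the same $l$, produces an $l$-uniform biarithmetic IASI.

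The forward implication is where the real work lies, and the hard part will be showing that $l$-uniformity forbids odd cycles, i.e.\ forces every component to be bipartite. I would orient each edge from its finer to its coarser end and set $x_v=|f(v)|-1\ge 2$; the lemma converts $l$-uniformity into the family of linear equations $x_{\mathrm{fine}}+k_e\,x_{\mathrm{coarse}}=l-1$, one per edge, with every ratio $k_e\ge 2$. Along a cycle these equations over-determine the $x_v$, and the crux is that each coefficient being at least $2$ (strict, because $k>1$ in the biarithmetic definition) makes the alternating substitution around an odd closed walk collapse to an identity of the form $(\text{positive multiple of some }x_v)=0$, contradicting $x_v\ge 2$. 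I would first verify this on a triangle, where the three equations force $x_v=0$ at once, and then extend it to an arbitrary odd cycle by propagating the relations along the monotone up- and down-runs between consecutive local extrema of the deterministic-index sequence, the odd length being exactly what prevents the signed ratios from cancelling. Consequently each component is bipartite, its two sides carrying the set-indexing numbers $m_i$ and $n_j$; tallying the distinct pairs $(m_i,n_j)$ compatible with the common value $l$ then identifies $p$ with the number of bipartite components. I expect the pair-counting bookkeeping (confirming that each component contributes a single well-defined pair) to be the remaining delicate point, handled by chasing the uniformity equations across each connected bipartite piece.
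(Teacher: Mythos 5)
Note first that the paper offers no proof of Theorem \ref{T-CUIASI2} at all: it is quoted with a citation to [GS8] as background for Section 3, so there is no internal argument to compare yours against; what follows is an assessment of your attempt on its own terms. The decisive problem is an internal contradiction that you notice and then walk past. Your opening lemma is correct: with $1<k\le m_i$ the sum set is a consecutive run and the edge has set-indexing number $m_i+k(n_j-1)$. But the paper's definition of a biarithmetic IASI requires $1<k$, i.e.\ $k\ge 2$ on \emph{every} edge, and since all set-labels here have at least three elements, $k(n_j-1)\ge 2(n_j-1)>n_j-1$. So by your own lemma no edge of a biarithmetic IASI graph can ever have set-indexing number $m_i+n_j-1$: the ``isoarithmetic limit $k=1$'' you invoke to reconcile your formula with the theorem's $l=m_i+n_j-1$ is exactly the value excluded by the hypothesis. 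This sinks the converse as written --- in your bipartite construction every edge carries the value $m_i+k(n_j-1)\neq l$, so the sentence ``by the lemma every edge carries the common value $l$'' is false --- and it renders your forward direction vacuous, since the hypothesis ``biarithmetic and $l$-uniform with $l=m_i+n_j-1$'' is unsatisfiable under your lemma. You needed to confront this mismatch (e.g.\ decide whether $l$ should really be $m_i+k(n_j-1)$, or whether the quoted statement concerns the isoarithmetic case) before either implication could mean anything.

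Two further gaps would remain even after fixing the value of $l$. In the forward direction you verify only the triangle; the general odd-cycle claim is a sketch, and the phrase ``propagating the relations along the monotone up- and down-runs'' is not an argument. (The claim is plausible --- rewriting the edge equation as $x_u d_u + x_w d_w=(l-1)\min(d_u,d_w)$ and solving the cyclic system does force $x_v\le 0$ in worked examples --- but the alternating-sum solution of such a system is not automatically nonpositive, so this step genuinely needs a proof, not a heuristic about signed ratios.) Finally, the pair-counting step you defer as ``delicate'' is in fact unworkable as planned: a single bipartite component can realize several distinct pairs $(m_i,n_j)$ compatible with the same $l$ (the uniformity equation constrains one combination per edge, not the individual cardinalities across a component), while two disjoint isomorphic components realize the same pair; hence ``each component contributes a single well-defined pair'' is false, and the identity $p=$ number of components cannot be obtained by chasing the uniformity equations. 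That equality is already suspect in the quoted statement itself, which is a further reason the attempt should have flagged, rather than absorbed, the discrepancies.
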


What are the characteristics of the line graph of a biarithmetic graph? The following results provide a solution to the problem.

\begin{theorem}
Let $G$ be a biarithmetic IASI graph. Then, its line graph $L(G)$ admits an isoarithmetic IASI if and only if $G$ is bipartite.
\end{theorem}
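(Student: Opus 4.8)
The plan is to reduce the whole question to a single statement about the deterministic indices of the edges of $G$. Recall that a vertex of $L(G)$ corresponds to an edge of $G$, and two such vertices are adjacent exactly when the underlying edges of $G$ are incident. The natural labeling to put on $L(G)$ is the induced one: assign to each vertex of $L(G)$ the set-label $f^+(e)$ of its corresponding edge $e$ of $G$, just as in the isoarithmetic case treated earlier. For this labeling to be isoarithmetic we need every vertex of $L(G)$ to carry the \emph{same} common difference, and by Proposition \ref{P-AIASI-1} it then follows automatically that every edge of $L(G)$ carries that common difference as well. Hence the problem becomes: \emph{the induced IASI on $L(G)$ is isoarithmetic if and only if all edges of $G$ have the same deterministic index.} I would record this equivalence first.

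The key computational input is the deterministic index of a single edge of $G$. For an edge $v_iv_j$ with deterministic indices $d_i,d_j$ satisfying, say, $d_j=k\,d_i$ with $1<k\le |A_i|$ (the biarithmetic condition), Theorem \ref{T-AIASI-g} guarantees that the sum set $A_i+A_j$ is again an AP-set, and a short check shows its common difference is the smaller index $d_i=\min(d_i,d_j)$. Thus every edge of $G$ has deterministic index equal to the minimum of the two endpoint indices, and the biarithmetic hypothesis $k>1$ forces the two endpoints of any edge to have \emph{strictly different} indices. I would state this as the main lemma feeding both directions.

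For the forward direction, suppose the induced labeling on $L(G)$ is isoarithmetic, so all edges of $G$ share one common difference $d$. Let $D$ be the set of vertices of $G$ whose deterministic index equals $d$. Since each edge has index $\min(d_i,d_j)=d$, no vertex can have index below $d$, and each edge has exactly one endpoint of index $d$; two vertices of $D$ cannot be adjacent (that edge would have both endpoints of index $d$, contradicting strictness), so $D$ is independent, and likewise $V\setminus D$ is independent (an edge inside it would have minimum index $>d$). Therefore $(D,\,V\setminus D)$ is a bipartition and $G$ is bipartite. For the converse, given a bipartition $(X,Y)$, I would exhibit a biarithmetic IASI by labeling every vertex of $X$ with an AP-set of common difference $d$ and every vertex of $Y$ with an AP-set of common difference $k d$ for a fixed $k>1$ (choosing cardinalities large enough that $1<k\le |f(x)|$ for $x\in X$). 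Every edge then runs between $X$ and $Y$ and so has index $\min(d,kd)=d$; passing to $L(G)$ makes all its vertices, and hence by Proposition \ref{P-AIASI-1} all its edges, share the common difference $d$, giving an isoarithmetic IASI.

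The main obstacle I expect is the forward direction: I must convert the purely numerical condition "all edges carry one common difference" into the combinatorial conclusion that $G$ is bipartite, and this hinges entirely on the strict inequality of endpoint indices supplied by the biarithmetic hypothesis. I would also take care, before running the argument, to fix the intended meaning of ``$L(G)$ admits an isoarithmetic IASI'' as referring to the labeling induced from the biarithmetic IASI of $G$, since an arbitrary isoarithmetic labeling of $L(G)$ exists for every graph and would make the stated equivalence vacuous.
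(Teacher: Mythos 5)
Your proposal is correct and follows essentially the same route as the paper: both directions turn on the fact that each edge of a biarithmetic IASI graph inherits the smaller of its endpoints' deterministic indices, and the forward direction partitions $V(G)$ by deterministic index and uses $k>1$ to force both classes to be independent, exactly as the paper does. The only divergence is one of care rather than method: you fix the induced-labeling interpretation explicitly and, in the bipartite direction, \emph{construct} a biarithmetic IASI with uniform differences $d$ on $X$ and $kd$ on $Y$, whereas the paper tacitly assumes the given biarithmetic IASI already has that uniform form.
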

\begin{proof}
Let $G$ be a bipartite graph which admits a biarithmetic IASI, with the bipartition $(X,Y)$. Since $G$ admits a biarithmetic IASI, there exists an integer $k>1$ such that the vertices of $X$ are labeled by distinct AP-sets of non-negative integers with common difference $d$ and the vertices of $Y$ are labeled by distinct AP-sets of non-negative integers with common difference $kd$.  Then, the set-label of every edge of $G$ is also an AP-set with the common difference $d$. Therefore, the set-labels of all vertices in $L(G)$ are AP-sets with the same common difference $d$. Hence, every edge of $L(G)$ also has a set-label which is an AP-sets with the same common difference $d$. That is, $L(G)$ admits an isoarithmetic IASI.

Conversely, let $L(G)$ is an isoarithmetic IASI graph. Hence, every element of $L(G)$ must be labeled by an AP-set with common difference $d$. Therefore, the all the edges in $G$ must have set labels which are AP-sets with the same common difference $d$. Since, $G$ admits a biarithmetic IASI, the set-label of one end vertex of every edge must be an AP-set with common difference $d$ and the set-label of the other end vertex is an AP-set with the common difference $kd$. Let $X$ be the set of all vertices of $G$ which are labeled by the AP-sets with common difference $d$ and $Y$ be the set of all vertices of $G$ labeled by the AP-sets with common difference $kd$. Since $k>1$, no two vertices in $X$ can be adjacent to each other and no two vertices in $Y$ can be adjacent to each other. Therefore, $(X,Y)$ is a bipartition of $G$. Hence, $G$ is bipartite. This completes the proof.
\end{proof}

\begin{theorem}\label{T-kAIASI1}
If the line graph $L(G)$ of a biarithmetic IASI graph $G$ admits a biarithmetic IASI, then $G$ is acyclic.
\end{theorem}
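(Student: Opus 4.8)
The plan is to work with the labeling that $L(G)$ inherits from the given biarithmetic IASI $f$ of $G$, exactly as in the preceding line-graph theorems: each vertex of $L(G)$ carries the set-label $f^+(e)$ of the edge $e$ of $G$ to which it corresponds. (This is the only reading under which the statement can hold, since otherwise $G=C_3$, whose line graph is again $C_3$ and is biarithmetically labelable on its own, would be a counterexample.) The whole argument rests on one preliminary observation about deterministic indices, which I would isolate first. If $u$ and $v$ are adjacent in $G$ with deterministic indices $d_u$ and $d_v$, then writing $f(u),f(v)$ as AP-sets and forming the sum set $f(u)+f(v)$, the biarithmetic hypothesis (say $d_v=k\,d_u$ with $1<k\le |f(u)|$) forces the coefficient set $\{i+jk:0\le i\le|f(u)|-1,\,0\le j\le|f(v)|-1\}$ appearing in $f(u)+f(v)$ to be a contiguous block of integers; hence $f^+(uv)$ is an AP-set whose common difference is $\min(d_u,d_v)$. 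Thus the deterministic index of an edge equals the smaller of the deterministic indices of its two end vertices, and since along any edge of a biarithmetic IASI graph the two endpoint indices are distinct, this minimum is attained strictly.

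With this in hand, I would argue by contradiction. Suppose $G$ contains a cycle $C\colon v_1v_2\cdots v_nv_1$ (so $n\ge 3$, as $G$ is simple), and let $d_i$ denote the deterministic index of $v_i$. Choose an index $r$ at which $d_r$ is minimum among $d_1,\dots,d_n$. Because the two end vertices of each cycle edge have distinct deterministic indices, we have $d_r<d_{r-1}$ and $d_r<d_{r+1}$ (indices read cyclically), so $v_r$ is a strict local minimum. By the preliminary observation, the two cycle edges $e_{r-1}=v_{r-1}v_r$ and $e_r=v_rv_{r+1}$ both have deterministic index exactly $d_r$. These two edges share the vertex $v_r$, hence the corresponding (distinct) vertices of $L(G)$ are adjacent, yet they carry equal deterministic indices $d_r$. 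This contradicts the assumption that the inherited labeling is a biarithmetic IASI of $L(G)$, under which adjacent vertices must have deterministic indices in a ratio $k'>1$, and so in particular unequal. Therefore $G$ has no cycle, i.e. $G$ is acyclic.

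The routine part is the sum-set computation establishing the rule that an edge's deterministic index is the minimum of its endpoint indices; I expect the main conceptual point to be the passage from a cycle to a forced equality of adjacent edge indices. The clean way to secure this is to take the global minimum of the $d_i$ around the cycle rather than hunting for local extrema: the global minimizer is automatically a strict local minimum once one knows consecutive indices differ, and it is precisely at such a vertex that both incident cycle edges collapse to the same deterministic index. One small point to verify is that $e_{r-1}$ and $e_r$ are genuinely distinct vertices of $L(G)$, which is immediate since the cycle has length at least three.
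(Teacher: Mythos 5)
Your proof is correct, and it reaches the paper's conclusion by a genuinely cleaner route. Both arguments proceed by contradiction on a cycle $v_1v_2\cdots v_nv_1$, using the labeling that $L(G)$ inherits from $G$ (your reading is the paper's intent), and both exploit the fact that adjacent vertices of a biarithmetic IASI graph have distinct deterministic indices. But the mechanisms differ: the paper forces the indices around the cycle into a single geometric progression $d_i=k^i d_1$ with one fixed multiplier $k$ and locates the contradiction at the closing edge $v_nv_1$, whose end vertices would carry indices $d_1$ and $k^n d_1$; this tacitly assumes the same $k$ on every edge (in effect, an identical biarithmetic IASI) and a monotone increase of indices around the cycle, neither of which is justified by the hypothesis. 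You instead isolate the sum-set lemma --- under $d_v=k\,d_u$ with $1<k\le |f(u)|$ the blocks $[jk,\,jk+|f(u)|-1]$ of coefficients are contiguous, so $f^+(uv)$ is an AP-set with difference $\min(d_u,d_v)$ --- and then take a global minimizer $d_r$ of the indices on the cycle: since consecutive indices differ, $v_r$ is a strict local minimum, both incident cycle edges get index exactly $d_r$, and the corresponding adjacent vertices of $L(G)$ carry equal indices, contradicting biarithmeticity of the inherited labeling. This buys generality (no uniform $k$, no monotonicity) and makes explicit the min-rule that the paper uses only implicitly (e.g., in its bipartite line-graph theorem, where every edge label gets the smaller difference $d$). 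One aside: your parenthetical claim that $C_3$ is biarithmetically labelable conflicts with the paper's later assertion that $K_3$ admits no biarithmetic IASI, although under the paper's literal definition a triangle with indices $d,2d,4d$ and $|f(v_1)|\ge 4$ does seem admissible, so your remark is defensible; in any case it plays no role in the argument itself.
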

\begin{proof}
Assume that $L(G)$ is a biarithmetic IASI graph. If possible, let $G$ contains a cycle $C_n=v_1v_2v_3\ldots v_nv_1$. Let $e_i=v_iv_{i+1}, 1\le i \le n$ and let $u_i$ be the vertex in $L(G)$ corresponding to the edge $e_i$ in $G$. Label each vertex $v_i$ of $G$ by the set whose elements are arithmetic progression with common difference $d_i$ where $d_{i+1}=k.d_i;~ k\ge |f(v_i)|_{min}$.  Without loss of generality, let $f(v_1)$ has the minimum cardinality. Since $L(G)$ admits a biarithmetic IASI, adjacent vertices $u_i$ and $u_{i+1}$ in $L(G)$ are labeled by the sets whose elements are in arithmetic progressions whose common differences are $d_i$ and $d_{i+1}=k.d_i$ respectively. Therefore, the corresponding edges $e_i$ and $e_{i+1}$ of $G$ must also have the same set-labeling. Hence, alternate vertices of $G$ can not have the set-labels with the same common difference. Then, $d_i=k^i.d_1, 1<k\le |f(v_1)|$. Here, we notice that the set-label of one end vertex $v_n$ of the edge $v_nv_1$ in the cycle $C_n$ has the common difference $k^n.d_1$ and the set-label of  other end vertex $v_1$ has the common difference $d_1$, which is a contradiction to the fact that $G$ is biarithmetic IASI graph. Therefore, G is acyclic.
\end{proof}

\begin{remark}{\rm
The converse of the theorem need not be true. For example, the graph $K_{1,3}$ admits a biarithmetic IASI and is acyclic, but its line graph does not admit a biarithmetic IASI.}
\end{remark}

What is the condition required for an acyclic graph to admit a biarithmetic IASI? The following theorem establishes the necessary and sufficient condition for a biarithmetic IASI graph to have its line graph, a biarithmetic IASI graph.

\begin{theorem}\label{T-kAIASI2}
The line graph of a biarithmetic IASI graph admits a biarithmetic IASI if and only if $G$ is a path.
\end{theorem}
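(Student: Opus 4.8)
The plan is to prove the biconditional in Theorem~\ref{T-kAIASI2} by combining the two preceding results and then analyzing what structural constraint on a biarithmetic graph $G$ forces a biarithmetic IASI on $L(G)$. The statement asserts that $L(G)$ admits a biarithmetic IASI if and only if $G$ is a path. One direction is nearly immediate from the established machinery: Theorem~\ref{T-kAIASI1} already shows that if $L(G)$ is a biarithmetic IASI graph then $G$ must be acyclic, so the only candidates for $G$ are forests. The remaining work is therefore to pin down \emph{which} acyclic graphs work, and the Remark following Theorem~\ref{T-kAIASI1} gives the key obstruction: $K_{1,3}$ is acyclic but its line graph $L(K_{1,3})=K_3$ does not admit a biarithmetic IASI, since a biarithmetic IASI cannot be assigned around an odd cycle (the three mutually adjacent vertices would force inconsistent common differences). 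This suggests that the true characterization rules out vertices of degree three or more.

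First I would prove the easy (sufficiency) direction: assume $G$ is a path $v_1v_2\ldots v_n$. Label the vertices so that consecutive vertices have common differences $d$ and $kd$ alternately for a fixed integer $k>1$ with $k$ not exceeding the relevant cardinality, as permitted by the biarithmetic definition and Theorem~\ref{T-AIASI-g}. Then by Proposition~\ref{P-AIASI-1} and the analysis already carried out in the proof of the isoarithmetic line-graph theorem, each edge $e_i=v_iv_{i+1}$ of $G$ receives a set-label that is an AP-set with common difference equal to the smaller of the two deterministic indices, namely $d$. Since $L(P_n)=P_{n-1}$ is again a path, I would assign to the vertices of $L(G)$ (which correspond to the edges $e_i$) the edge-labels of $G$, and verify directly that adjacent vertices $u_i,u_{i+1}$ of $L(G)$ can be re-scaled, or already carry, deterministic indices in the biarithmetic ratio $k{:}1$. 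The point is that a path imposes no cyclic consistency condition, so the alternating assignment closes up with no contradiction, giving a valid biarithmetic IASI on $L(G)$.

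For the necessity direction, I would argue the contrapositive: if $G$ is not a path, then (being acyclic by Theorem~\ref{T-kAIASI1}, which we may invoke since we are assuming $L(G)$ is biarithmetic) $G$ is a tree or forest that is not a single path, and hence contains a vertex $v$ of degree at least three. The three edges $e_1,e_2,e_3$ incident at $v$ are pairwise adjacent in $G$ and therefore give rise to three mutually adjacent vertices $u_1,u_2,u_3$ in $L(G)$, i.e. a triangle $K_3\subseteq L(G)$. I would then show, exactly as in the Remark's $K_{1,3}$ example, that a biarithmetic IASI cannot label a triangle: going around the three mutually adjacent vertices forces the deterministic indices to satisfy $d'=k_1 d$, $d''=k_2 d'$, $d=k_3 d''$ with each $k_i>1$, whence $k_1k_2k_3=1$, which is impossible for integers exceeding one. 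This contradiction establishes that a vertex of degree $\ge 3$ cannot occur, so $G$ must be a path.

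The main obstacle I anticipate is not the degree-three obstruction, which is essentially the Remark repackaged, but rather making the sufficiency direction fully rigorous: one must confirm that the alternating common differences $d,kd,d,kd,\ldots$ along the path induce edge-labels whose common differences themselves alternate (or at least differ by a factor $k$) in a way compatible with the \emph{biarithmetic} requirement on $L(G)$, not merely the weaker arithmetic requirement. Here the inequality constraint $1<k\le |f(v_i)|$ from the definition of biarithmetic IASI must be checked to carry over to the vertices of $L(G)$, whose set-cardinalities are those of the edge-labels of $G$; by Theorem~\ref{T-AIASI-g} these cardinalities are at least as large as the vertex-label cardinalities, so the bound is preserved, but this step deserves explicit verification. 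A secondary subtlety is the degenerate boundary behavior at the endpoints of the path and the case of very short paths ($P_2$, $P_3$), which I would dispatch separately to ensure the construction is well-defined there.
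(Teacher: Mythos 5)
Your proposal has a genuine gap in the sufficiency direction, and it is exactly the point you flagged but did not resolve. If you label the path with \emph{alternating} deterministic indices $d, kd, d, kd, \ldots$, then every edge $e_i=v_iv_{i+1}$ of $G$ receives a set-label that is an AP-set with the \emph{same} common difference $d$ (the smaller of the two indices, as you yourself note via Proposition~\ref{P-AIASI-1} and Theorem~\ref{T-AIASI-g}). Consequently the induced labels on $L(G)$ give every vertex deterministic index $d$, so adjacent vertices of $L(G)$ have index ratio $1$ --- an \emph{isoarithmetic} IASI, which violates the strict requirement $k>1$ in the definition of a biarithmetic IASI. Your hedge that the indices ``can be re-scaled, or already carry'' the ratio $k{:}1$ is precisely where the construction fails, and no rescaling of the induced labels fixes it. The paper avoids this by making the deterministic indices \emph{strictly escalate geometrically} along the path, $d_i=k^i d_1$ with $1<k\le |f(v_1)|$ (taking $f(v_1)$ of minimum cardinality); then the edge $e_i$ gets difference $k^{i-1}d_1$ and consecutive edges carry differences in ratio exactly $k>1$, so the induced labeling of $L(P_n)=P_{n-1}$ is genuinely biarithmetic.

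There is a second, smaller flaw in your necessity argument. Your triangle obstruction assumes the three ratio relations orient cyclically, forcing $k_1k_2k_3=1$; but the biarithmetic condition only says that for each adjacent pair \emph{one} index is an integer multiple of the other, with no prescribed direction around the triangle. One can attempt indices $d$, $k_1d$, $k_1k_2d$, where the third adjacency is satisfied with multiplier $k_1k_2>1$, and your product argument yields no contradiction. (For an \emph{identical} biarithmetic IASI, where $k$ is the same for all pairs, the argument does close: $k^2d=kd$ forces $k=1$.) The paper sidesteps this entirely: its necessity is the one-line citation of Theorem~\ref{T-kAIASI1}, which as written only yields acyclicity --- so you correctly identified that the published proof leaves the gap from ``acyclic'' to ``path'' (the Remark's $K_{1,3}$ example shows the gap is real), and your instinct to exclude vertices of degree $\ge 3$ via triangles in $L(G)$ is the right completion; it just needs the non-biarithmeticity of $K_3$ established by the cardinality constraints or cited from the paper's own assertion (in the total-graph theorem) rather than by the invalid cyclic-product argument.
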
 
\begin{proof}
The necessary part of the theorem follows from Theorem \ref{T-kAIASI1}. Conversely, assume that $G$ is a path. Let $G=v_1v_2v_3\ldots v_n$. Label the vertex $v_i$ by an AP-set with the common difference $d_i$, where $k\le |f(v_i)|_{min}$. Without loss of generality, let $f(v_1)$ has the minimum cardinality. Then, $d_i=k^i.d_1, 1<k\le |f(v_1)|$. Then, the set-label of each edge $e_i$ of $G$ is an AP-set with difference $d_i=k.d_{i-1}$. Hence, the each vertex $u_i$ in $L(G)$ corresponding to the edge $e_i$ has the set-label which is an AP-set with the common difference $d_i=k.d_{i-1}=k^{i-1}.d_1$. Hence, $L(G)$ admits a biarithmetic IASI. This completes the proof. 
\end{proof}

In the above theorems, the value of $k$ should be within $1$ and $|f(v_i)|min$, the minimum among the set-indexing numbers of the vertices of $G$. A question that arises in this context is about the validity of these results if $k>|f(v_i)|min$. The following result answers this question.

\begin{theorem}
Let $G$ admits a biarithmetic IASI $f$. Let $k=\frac{f(v_i)}{f(v_j)}$ for any two adjacent vertices of $G$. If $k> \min (|f(v_i)|)$, then the line graph of $G$ does not admit an arithmetic IASI. 
\end{theorem}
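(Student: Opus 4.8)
The plan is to argue by contradiction, working with the labelling that $L(G)$ naturally inherits from the biarithmetic IASI $f$ on $G$, exactly as in the proofs of Theorems \ref{T-kAIASI1} and \ref{T-kAIASI2}: every vertex of $L(G)$ is the image of an edge $e=v_iv_j$ of $G$ and must carry the set-label $f^{+}(e)=f(v_i)+f(v_j)$ forced by $f$. The whole argument will hinge on producing one such edge whose sum-set fails to be an AP-set, so that the corresponding vertex of $L(G)$ receives a label that is not an arithmetic progression; since every element of an arithmetic IASI graph is required to be set-labelled by an AP-set, this single bad label will prevent $L(G)$ from being an arithmetic IASI graph.

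First I would isolate the sum-set computation underlying Theorem \ref{T-AIASI-g}. If $v_i$ is the endpoint of smaller deterministic index, so that $d_j=k\,d_i$, and $f(v_i)=\{a,a+d_i,\dots,a+(m-1)d_i\}$ with $m=|f(v_i)|$, while $f(v_j)=\{b,b+kd_i,\dots,b+(n-1)kd_i\}$, then the multipliers of $d_i$ occurring in $f(v_i)+f(v_j)$ are exactly the integers $p+qk$ with $0\le p\le m-1$ and $0\le q\le n-1$. I would check that this collection is a block of consecutive integers, equivalently that $f^{+}(e)$ is an AP-set of common difference $d_i$, precisely when $k\le m$; and that for $k>m$ a gap opens between the largest multiplier with $q$ fixed and the smallest with $q+1$, so $f^{+}(e)$ is \emph{not} an AP-set. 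This is the combinatorial heart of the proof, although it is essentially the content already encapsulated in Theorem \ref{T-AIASI-g}.

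Next I would locate the witnessing edge. Let $v_m$ be a vertex realising $|f(v_m)|=\min_i|f(v_i)|$. Since $G$ has no isolated vertices, $v_m$ has incident edges, and I would single out an incident edge $e$ on which $v_m$ is the endpoint of smaller deterministic index, so that the relevant multiplier for $e$ is $k$. By hypothesis $k>\min_i|f(v_i)|=|f(v_m)|$, so the previous paragraph forces $f^{+}(e)$ to be a non-AP-set. The vertex of $L(G)$ corresponding to $e$ is therefore labelled by a set that is not an arithmetic progression, and hence $L(G)$ cannot carry an arithmetic IASI extending the inherited labelling. This completes the contradiction and establishes the statement.

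The step I expect to be the main obstacle is the witnessing-edge argument, namely guaranteeing that the minimum-cardinality vertex $v_m$ actually occurs as a smaller-deterministic-index endpoint of some incident edge, rather than sitting as a local maximum of the deterministic index along all of its edges (in which case each incident sum-set would still be an AP-set and no contradiction would arise). I would handle this by tracing the deterministic-index pattern of the biarithmetic labelling $f$ through the neighbourhood of $v_m$, using the ratio $k$ to show that the constraint $k>|f(v_m)|$ is incompatible with $v_m$ being a smaller-index endpoint, thereby forcing the offending edge into existence. A secondary point to address is that, since the vertex-labels of $L(G)$ are rigidly determined by the edge-labels of $G$ under this construction, the bad AP-failure cannot be repaired by any readjustment, so the obstruction is intrinsic to the inherited labelling.
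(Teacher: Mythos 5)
Your skeleton matches the paper's argument: give $L(G)$ the labelling it inherits from $f$ (the vertex corresponding to an edge $e=v_iv_j$ gets $f^{+}(e)=f(v_i)+f(v_j)$), exhibit one edge whose sum-set is not an AP-set, and conclude that the corresponding vertex of $L(G)$ is badly labelled. Your sum-set computation --- the multipliers $p+qk$, $0\le p\le m-1$, $0\le q\le n-1$, form a consecutive block if and only if $k\le m$ --- is a correct fleshing-out of what the paper invokes tersely through Theorem \ref{T-AIASI-g} and the notion of a semi-arithmetic IASI, and that part is sound.

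The problem is the witnessing-edge step, which you rightly flagged as the main obstacle: your sketch for closing it is logically inverted, and under your reading of the hypothesis the step in fact cannot be closed. You propose to show that $k>|f(v_m)|$ is ``incompatible with $v_m$ being a smaller-index endpoint, thereby forcing the offending edge into existence''; but if $v_m$ cannot be a smaller-index endpoint, then \emph{no} edge incident on $v_m$ is offending --- each such edge has its smaller-index endpoint at a neighbour $u$ with $k\le |f(u)|$, so every incident sum-set is an AP-set and no contradiction arises anywhere. This is not a repairable technicality: take a star $K_{1,n}$ whose centre carries deterministic index $kd$ and the minimum cardinality $3$, while the leaves carry index $d$ and cardinalities at least $k$, with $k>3$; then $k>\min_i |f(v_i)|$, yet every edge label is an AP-set with difference $d$, and the inherited labelling on $L(K_{1,n})=K_n$ is isoarithmetic. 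So the hypothesis, read with the minimum taken over all vertices of $G$, simply does not force a bad edge to exist. The paper avoids (rather than solves) this difficulty by reading the hypothesis per edge: $k$ exceeds the set-indexing number of the smaller-index endpoint of some actual edge, so that $f$ violates the bound in Theorem \ref{T-AIASI-g} at that edge and is merely semi-arithmetic --- the offending edge is then \emph{given} by hypothesis, not derived. Under that reading your first two paragraphs already complete the proof and your third paragraph is unnecessary; as written, your third step is the wrong approach and would fail.
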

\begin{proof}
Let $G$ admits a biarithmetic IASI $f$ and let $V(G)=\{v_1,v_2,v_3,\ldots,\\ v_n\}$ be the vertex set of $G$. If possible, let $k>\min (|f(v_i)|)$. Then, the set-label of the edge $v_iv_{i+1}$ will not be an AP-set. That is, $f$ is a semi-arithmetic IASI. Therefore, the set-label of the vertex $u_l$ of its line graph $L(G)$ corresponding to the edge $v_iv_{i+1}$ in $G$ is not an AP-set. Hence, for $k>|f(v_i)|_{min};~ v_i\in V(G)$, the line graph $L(G)$ of a biarithmetic IASI graph does not admit an arithmetic IASI.
\end{proof}

\begin{theorem}
The total graph of an identical biarithmetic IASI graph is an arithmetic IASI graph.  
\end{theorem}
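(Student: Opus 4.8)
The plan is to extend the given labeling $f$ of $G$ to a labeling $f'$ of the vertex set of $T(G)$ and then to invoke the characterisation of arithmetic IASI graphs in Theorem \ref{T-AIASI-g}. Recall that the vertices of $T(G)$ correspond bijectively to the vertices and edges of $G$. I would let $f'$ assign to each $T(G)$-vertex coming from a $G$-vertex $v$ the set $f(v)$, and to each $T(G)$-vertex coming from a $G$-edge $e=uv$ the set $f^+(e)=f(u)+f(v)$. Since $f$ is an identical biarithmetic IASI, every $f(v)$ is an AP-set, and every $f^+(e)$ is an AP-set whose common difference equals the smaller of the two deterministic indices of $u$ and $v$; this last fact is what the sum-set behaviour behind Theorem \ref{T-AIASI-g} and Proposition \ref{P-AIASI-1} provides, since the fixed ratio $k$ satisfies $k\le |f(v_i)|$ for the lower-index endpoint $v_i$. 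Thus every vertex of $T(G)$ receives an AP-set, and injectivity of $f'$ follows from the injectivity of $f$ and of $f^+$ (arranging the underlying progressions so that no vertex-label coincides with an edge-label).

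By Theorem \ref{T-AIASI-g} it then suffices to verify, for each edge of $T(G)$, that the larger deterministic index of its two end vertices is an integral multiple of the smaller and that this multiple is at most the cardinality of the smaller-index label. I would organise this according to the three kinds of adjacency in $T(G)$. For two $T(G)$-vertices arising from adjacent vertices $u,v$ of $G$, the required condition is exactly the defining condition of the biarithmetic IASI (ratio $k\le |f(v_i)|$), so nothing new is needed. For a $T(G)$-vertex from a vertex $v$ together with one from an incident edge $e=uv$: if $v$ is the lower-index endpoint then $v$ and $e$ share the same deterministic index (multiple $1$), while if $v$ is the higher-index endpoint then the index of $v$ is $k$ times that of $e$, and since the low endpoint $u$ satisfies $k\le|f(u)|\le|f^+(e)|$ the cardinality bound holds.

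The main obstacle, and the place where the hypothesis that the IASI is \emph{identical} biarithmetic is genuinely used, is the edge-edge adjacency. Here I would exploit the observation that, because the common ratio $k$ is the same for every adjacent pair, every neighbour of a vertex $w$ has deterministic index either $d_w/k$ or $kd_w$, so every edge incident to $w$ has deterministic index lying in $\{d_w/k,\,d_w\}$. Consequently two edges meeting at $w$ have deterministic indices whose ratio is either $1$ or $k$: in the former case the multiple is $1$, and in the latter the low-index edge is incident to a low vertex whose label has cardinality at least $k$, so the bound $k\le|f^+(\cdot)|$ again holds. Having checked all three adjacency types, every edge of $T(G)$ satisfies the condition of Theorem \ref{T-AIASI-g}, whence $T(G)$ is an arithmetic IASI graph. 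I expect the only delicate points to be this edge-edge case (which would fail without the fixed-$k$ assumption) and the routine verification that the extended labeling is injective.
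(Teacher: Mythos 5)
Your proposal is correct, and while it shares the paper's overall strategy (transfer the labels of $G$ to the vertices of $T(G)$ and verify the condition of Theorem \ref{T-AIASI-g} on each of the three adjacency types), your treatment of the edge--edge adjacencies is genuinely different from --- and in fact more careful than --- the paper's. The paper's proof simply asserts that all vertices of $T(G)$ arising from edges of $G$ ``have the same deterministic index, say $d$,'' which is true only when the deterministic indices of the vertices of $G$ take just two values $d$ and $kd$; it fails, for instance, on a path $v_1v_2v_3$ labeled with indices $d$, $kd$, $k^2d$ (a perfectly good identical biarithmetic IASI, since the ratio $k$ is the same for both adjacent pairs), where the two edges have indices $d$ and $kd$ respectively. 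Your argument handles exactly this situation: you observe that the fixed ratio $k$ forces every edge incident to a vertex $w$ to have index in $\{d_w/k,\,d_w\}$, so two edges meeting at $w$ have index ratio $1$ or $k$, and in the latter case the bound $k\le |f^+(e)|$ follows because the sumset label of the low-index edge contains a translate of the label of its low endpoint, whose cardinality is at least $k$. This is precisely where the \emph{identical} hypothesis is used, a point the paper's proof leaves implicit. So your route buys a proof valid for all identical biarithmetic IASI graphs, whereas the paper's argument, read literally, covers only the two-level case. One small caveat on your side: the injectivity of the extended labeling (a vertex-label of $G$ could coincide with an edge-label) is dispatched in a parenthetical; the paper ignores it entirely, so this is consistent with the ambient level of rigor, but a fully formal proof would spell out how the progressions are chosen to keep $f'$ and its induced edge-function injective.
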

\begin{proof}
The vertices of $T(G)$ corresponding to the vertices of $G$ have the same set-labels and the edges in $T(G)$ connecting these vertices also preserve the same set-labels of the corresponding edges of $G$. The vertices of $T(G)$ corresponding to the edges of $G$ are given the same set-labels of the corresponding set-labels of the edges of $G$. Hence, all these vertices in $T(G)$ have the same deterministic index, say $d$, and hence the edges in $T(G)$ connecting these vertices also have the same deterministic index $d$. As the deterministic index of an edge and one of its end vertex are the same and the deterministic index of the other end vertex is a positive integral multiple of the deterministic index of the edge, where this integer is less than or equal to the cardinality of the set-label of the other end vertex,  the edges corresponding to the incidence relations in $G$ also have the deterministic index $d$. Hence, $T(G)$ admits an arithmetic IASI.
\end{proof}

\begin{theorem}
The total graph of a biarithmetic IASI graph is an arithmetic IASI graph.  
\end{theorem}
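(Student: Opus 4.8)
The plan is to imitate the inheritance strategy used for the identical biarithmetic case and to check the criterion of Theorem~\ref{T-AIASI-g} directly on $T(G)$. Let $f$ be the given biarithmetic IASI on $G$, and define $f'$ on $V(T(G))$ by assigning to each vertex of $T(G)$ the set-label, under $f$, of the element (vertex or edge) of $G$ to which it corresponds. Since $f$ is injective on vertices and $f^+$ is injective on edges, $f'$ is injective on $V(T(G))$. A vertex-type vertex of $T(G)$ inherits the index $d_i$ of the corresponding $v_i$, while, by the arithmetic structure (an AP-set of common difference $d$ summed with an AP-set of common difference $kd$, $k$ at most the number of terms, is again an AP-set of common difference $d$), an edge-type vertex coming from $e=v_iv_j$ inherits the deterministic index $\min(d_i,d_j)$.

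Next I would verify the divisibility-and-bound condition of Theorem~\ref{T-AIASI-g} for each of the three kinds of adjacency in $T(G)$. For a pair of vertex-type vertices the condition is exactly the biarithmetic hypothesis on the corresponding adjacent pair of $G$, so it holds verbatim. For an incidence pair, a vertex-type vertex $v_i$ adjacent to an edge-type vertex $e=v_iv_j$, the edge index is $\min(d_i,d_j)$: this equals $d_i$ (ratio $1$), or, when $d_j<d_i$, it divides $d_i$ with quotient the biarithmetic multiplier $k$, and since $|f'(e)|=|f(v_i)+f(v_j)|\ge |f(v_j)|\ge k$ the required bound holds. The delicate case is a pair of adjacent edge-type vertices, say $e_i=v_av_b$ and $e_j=v_bv_c$ sharing $v_b$, whose inherited indices are $\min(d_a,d_b)$ and $\min(d_b,d_c)$.

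The main obstacle is precisely this edge--edge case when the shared vertex $v_b$ is a strict local maximum, i.e. $d_a<d_b$ and $d_c<d_b$. Then the two edge-type vertices receive indices $d_a=d_b/k_1$ and $d_c=d_b/k_2$, whose ratio $k_2/k_1$ need not be an integer, so the inherited $f'$ may fail the criterion of Theorem~\ref{T-AIASI-g}. This is exactly where the argument for the \emph{identical} biarithmetic case breaks down, since there all multipliers are equal and hence $d_a=d_c$. In every other subcase (where $v_b$ is not a strict maximum) the two edge indices are equal, giving ratio $1$, or differ by a single biarithmetic multiplier, and are routine.

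To circumvent the obstruction I would use that the theorem asks only for \emph{some} arithmetic IASI on $T(G)$, not for the inherited one. Every finite graph without isolated vertices admits an isoarithmetic IASI: take common difference $d=1$, label the vertices by integer intervals of pairwise distinct lengths $\ell\ge 3$ with starting points $c$ drawn from a Sidon set, so that the sums $c+c'$ are distinct over distinct vertex pairs; this makes both $f'$ and its induced $f'^{+}$ injective. By Proposition~\ref{P-AIASI-1} such an isoarithmetic IASI is in particular arithmetic, so $T(G)$ admits an arithmetic IASI. Thus the peak-vertex obstruction is harmless for the stated conclusion, even though it shows that the inherited labelling is itself arithmetic only when $G$ is identical biarithmetic.
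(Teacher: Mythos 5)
Your dissection of the inherited labelling is on target, and it exposes a genuine gap in the paper's own proof. The paper does exactly what your first two paragraphs describe: it transfers the set-labels of the elements of $G$ to the vertices of $T(G)$, verifies the criterion of Theorem \ref{T-AIASI-g} only for vertex--vertex pairs and for an edge against its own end vertices, and then asserts that ``for every two adjacent vertices in $T(G)$'' the divisibility-and-bound condition holds. The edge--edge adjacencies are never checked, and your peak-vertex case shows they can fail: take $d_a=2$, $d_b=12=6d_a$, $d_c=3$ with $d_b=4d_c$ (cardinalities large enough to meet the biarithmetic bounds $k\le |f(v_i)|$); the edge-type vertices of $T(G)$ coming from $v_av_b$ and $v_bv_c$ inherit deterministic indices $2$ and $3$, neither an integral multiple of the other, and indeed the sum of an AP-set of difference $2$ with one of difference $3$ need not be an AP-set. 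So, read as a claim about the inherited labelling, the paper's proof is incomplete and its conclusion is actually false for non-identical biarithmetic IASIs; only the preceding theorem (the identical case, where $k_1=k_2$ forces $d_a=d_c$) survives, exactly as you observe.

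Your rescue, however, proves a different theorem from the one the paper intends. The interval-plus-Sidon construction is a correct isoarithmetic, hence arithmetic, IASI on an arbitrary graph (injectivity of the vertex map from distinct starting points, injectivity of the induced edge map from distinct pairwise sums), but it makes the statement vacuous: every graph whatsoever would be an arithmetic IASI graph, with the biarithmetic hypothesis on $G$ playing no role. It is also inconsistent with how the paper uses these notions elsewhere: the Section 4 propositions (that $L(G)$, $T(G)$, $G\circ e$, etc.\ of a semi-arithmetic IASI graph ``do not admit'' an arithmetic IASI) and the theorem for $k>\min(|f(v_i)|)$ would all be falsified by your construction, so the paper's operative reading is clearly about the labelling induced from the given IASI, not bare existence. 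Under that reading your obstruction is not ``harmless'': it refutes the theorem as intended, and the honest conclusion is that the statement should either be restricted to identical biarithmetic IASIs or be explicitly recast in the existential sense, in which case it degenerates as above. Your mathematics is sound on both halves, but you should present the peak-vertex case as a counterexample to the paper's argument rather than as a removable technicality.
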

\begin{proof}
The vertices of $T(G)$ corresponding to the vertices of $G$ have the same set-labels and the vertices of $T(G)$ corresponding to the edges of $G$ are given the same set-labels of the corresponding set-labels of the edges of $G$. Also, the deterministic index of an edge and one of its end vertex are the same and the deterministic index of the other end vertex is a positive integral multiple of the deterministic index of the other end vertex, where this integer is less than or equal to the cardinality of the set-label of the other end vertex. Hence, for every two adjacent vertices in $T(G)$, the deterministic index of one is a positive integral multiple of of the deterministic index of the other, where this integer is less than or equal to the set-indexing number of the latter. Therefore, by Theorem \ref{T-AIASI-g}, $T(G)$ is an arithmetic IASI graph. 
\end{proof}

The following theorem checks whether the total graph corresponding to a biarithmetic IASI graph $G$ admits a biarithmetic IASI.

\begin{theorem}
The total graph of a biarithmetic IASI graph is not a biarithmetic IASI graph.  
\end{theorem}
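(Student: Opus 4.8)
The plan is to argue through the set-labeling that $T(G)$ naturally inherits from $G$, which is precisely the one used in the preceding two theorems to show that $T(G)$ is an arithmetic IASI graph: each vertex of $T(G)$ arising from a vertex $x$ of $G$ retains the label $f(x)$, and each vertex arising from an edge $e$ of $G$ receives the label $f^{+}(e)$. Under this assignment every vertex of $T(G)$ is already labeled by an AP-set, so the plain arithmetic condition of Theorem \ref{T-AIASI-g} holds; the only thing left to test is the strict biarithmetic ratio condition $1<k\le|f(v_i)|$ on adjacent vertices. First I would isolate the relevant local configuration. For any edge $e=uv$ of the biarithmetic graph $G$, say with $d_u\le d_v$, biarithmeticity forces $d_v=k\,d_u$ with $k>1$, and since $k\le|f(u)|$ the sum-set $f(u)+f(v)$ fills in to an AP-set whose common difference is the smaller value $d_u$. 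Hence the edge-vertex $e$ of $T(G)$ carries deterministic index $d_u$, exactly the index of the vertex-vertex $u$.

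Next I would use the incidence structure of $T(G)$. The three elements $u$, $v$, $e$ are pairwise adjacent in $T(G)$ (the two incidences $u$--$e$ and $v$--$e$, together with the adjacency $u$--$v$ inherited from $G$), so they span a triangle, and in this triangle the adjacent pair $(u,e)$ has equal deterministic index $d_u$. Invoking the definition of biarithmetic IASI directly --- which demands that the ratio of the deterministic indices of any two adjacent vertices be an integer \emph{strictly} greater than $1$ --- the pair $(u,e)$ has ratio $1$, so the inherited labeling is arithmetic but not biarithmetic. This already exhibits, for every edge of $G$, a forced pair of adjacent equal-index vertices in $T(G)$, and it is the core mechanism I would present.

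The main obstacle is upgrading the conclusion from ``the inherited labeling is not biarithmetic'' to ``$T(G)$ admits no biarithmetic IASI at all'', because biarithmeticity is only an edge-wise constraint and $T(G)$ may in principle be relabeled freely, so the equality of indices on the triangle $\{u,v,e\}$ need not persist under a different labeling. To close this gap I would try to show that the triangle cannot be oriented by divisibility with every ratio exceeding $1$ once it is embedded in the surrounding incidence structure of $T(G)$; here I would exploit the hereditary behaviour of biarithmetic labelings under induced subgraphs, together with the fact that $T(G)$ contains an induced copy of $L(G)$ on its edge-vertices, so that the earlier line-graph results (Theorems \ref{T-kAIASI1} and \ref{T-kAIASI2}) already force $G$ to be a path. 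I expect this universal step to be by far the most delicate part of the argument, and the place where the precise quantitative form of the ratio condition $1<k\le|f(v_i)|$ must be pushed hardest, since the local triangle obstruction alone does not immediately preclude every admissible reassignment of deterministic indices.
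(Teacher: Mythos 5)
Your local analysis is sound and you have correctly located the crux: exhibiting the triangle $\{u,v,e\}$ in $T(G)$ for each edge $e=uv$ of $G$ is exactly the paper's mechanism. But there is a genuine gap, and you name it yourself without closing it: you never prove that $K_3$ admits \emph{no} biarithmetic IASI under \emph{any} labeling, which is the single fact the paper's two-line proof rests on (every edge of $G$ spans a $K_3$ in $T(G)$; $K_3$ is not a biarithmetic IASI graph; done). The resolution is definitional rather than quantitative. Throughout this paper and its companion results, ``biarithmetic'' is operatively a two-class condition: the deterministic indices of the vertices take exactly two values $d$ and $kd$ with a fixed $k>1$, adjacent vertices always lying in different classes --- this is visible in the proof of the line-graph theorem (``the vertices of $X$ are labeled by \ldots common difference $d$ and the vertices of $Y$ \ldots common difference $kd$'') and in Theorem \ref{T-CUIASI2}, which forces bipartite components. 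Under this reading any odd cycle, in particular $K_3$, is immediately excluded by pigeonhole: two adjacent vertices of the triangle share an index class, giving ratio $1$ where the definition demands $k>1$. Your hesitation was justified under the literal edge-wise reading of the definition (indices $d$, $2d$, $4d$ with large enough set-labels would satisfy every pairwise divisibility constraint), but that reading is not the one the paper uses, and without settling this point your argument proves only that the \emph{inherited} labeling fails, not the theorem.

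Your proposed repair via line graphs does not close the gap and should be dropped. First, even granting that a biarithmetic IASI on $T(G)$ restricts to one on the induced copy of $L(G)$, so that Theorems \ref{T-kAIASI1} and \ref{T-kAIASI2} force $G$ to be a path, the total graph of a path still contains triangles, so you are returned to precisely the unproven triangle obstruction. Second, Theorems \ref{T-kAIASI1} and \ref{T-kAIASI2} as stated concern the labeling of $L(G)$ induced from a biarithmetic labeling of $G$, not arbitrary biarithmetic labelings of $L(G)$ as an abstract graph, so the hereditary transfer you invoke is not licensed by them. The correct and far shorter route is: (i) note that a biarithmetic IASI restricts to a biarithmetic IASI on any induced subgraph, (ii) prove the lemma that $K_3$ admits no biarithmetic IASI (via the two-class bipartiteness argument above, or by citing Theorem \ref{T-CUIASI2}), and (iii) observe that $T(G)$ contains $K_3$ whenever $G$ has at least one edge.
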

\begin{proof}
We observe that every edge in $G$ corresponds to a triangle $K_3$ in its total graph. Since $K_3$ can not admit a biarithmetic IASI, $T(G)$ is not a biarithmetic IASI graph.
\end{proof}

If $G$ is an identical biarithmetic IASI graph, will $G\circ e, ~e\in E(G)$ be an identical biarithmetic IASI graph? The cycle $C_4$ is a identical biarithmetic graph, but for any edge $e$ of $C_4$, $C_4\circ e=C_3$, which does not admit an identical biarithmetic IASI. Hence, we observe 

\begin{observation} 
A graph obtained from an identical biarithmetic IASI graph by contracting an edge of it, need not a biarithmetic IASI graph.
\end{observation}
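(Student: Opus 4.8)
The statement is an existence claim, so the plan is to produce one explicit identical biarithmetic IASI graph whose edge contraction destroys the property; the surrounding discussion already nominates $C_4$ as the witness, so I would build the argument around it. First I would record that $C_4=v_1v_2v_3v_4v_1$ does carry an identical biarithmetic IASI. Being bipartite with parts $\{v_1,v_3\}$ and $\{v_2,v_4\}$, I assign the vertices of one part AP-sets with common difference $d$ and those of the other part AP-sets with common difference $kd$ for a single fixed integer $k$ with $1<k\le |f(v_i)|_{min}$, choosing the initial terms so that $f$ is injective. By Proposition~\ref{P-AIASI-1} and Theorem~\ref{T-AIASI-g} every edge then receives an AP-set label with common difference $d$, and since the ratio $k$ is the same across every edge, this IASI is identical biarithmetic.

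Next I would carry out the purely combinatorial step: contracting any edge $e=v_iv_{i+1}$ of $C_4$ identifies its two endpoints into a single vertex, so the four-cycle collapses to a three-cycle, i.e. $C_4\circ e\cong C_3=K_3$. This reduces the observation to showing that $K_3$ admits no (identical) biarithmetic IASI.

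The heart of the argument, and the step I expect to be the main obstacle, is this last claim, because it is where the cyclic structure of the triangle must be exploited. I would argue by contradiction. Suppose $K_3$ with vertices $v_1,v_2,v_3$ carried an identical biarithmetic IASI with common ratio $k>1$, and let $d_1,d_2,d_3$ be the deterministic indices of its three mutually adjacent vertices. Since $k>1$, no two incident indices can be equal, so the three indices are distinct; order them as $d_1<d_2<d_3$. The identical biarithmetic condition forces, on each edge, the larger deterministic index to equal $k$ times the smaller. Traversing $v_1\to v_2\to v_3$ then gives $d_2=kd_1$ and $d_3=kd_2=k^2d_1$, whereas the remaining edge $v_1v_3$ forces $d_3=kd_1$. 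Hence $k^2d_1=kd_1$ with $d_1>0$, so $k^2=k$, i.e. $k=1$, contradicting $k>1$. Thus no identical biarithmetic IASI exists on $K_3$. Reading the statement in the weaker sense of ``biarithmetic'', the same conclusion is available from the fact, already used in the proof that $T(G)$ is not biarithmetic, that $K_3$ cannot admit a biarithmetic IASI.

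Finally I would assemble the pieces: $C_4$ is an identical biarithmetic IASI graph, yet for every edge $e$ the contraction $C_4\circ e=K_3$ fails to be one, so edge contraction need not preserve the biarithmetic property, which is exactly the observation. The only point demanding care is making the three multiplicative edge-relations around the odd cycle explicit; once they are written down, the contradiction $k^2=k$ is immediate and no further computation is needed.
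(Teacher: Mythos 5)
Your proposal is correct and follows essentially the same route as the paper, which likewise takes $C_4$ as the identical biarithmetic witness and observes that contracting any edge yields $C_3$, a graph admitting no identical biarithmetic IASI. Your explicit $k^{2}=k$ contradiction around the triangle merely supplies the detail that the paper leaves as a bare assertion (there, via the claim in the total-graph theorem that $K_3$ cannot admit a biarithmetic IASI), so the two arguments coincide in substance.
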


We also prove a similar for the graphs obtained from a biarithmetic IASI graph by a finite number of topological reductions. 

\begin{proposition}
Let $H$ be a graph obtained by finite number of topological reduction on a biarithmetic IASI graph $G$. Then, $H$ is not a biarithmetic IASI graph.
\end{proposition}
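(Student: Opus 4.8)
The plan is to reduce the universal claim to a statement about bipartiteness. Throughout this paper a biarithmetic IASI is used with exactly two deterministic indices, $d$ and $kd$ with integer $k>1$ (this is what underlies the Observation that $C_4\circ e=C_3$ carries no biarithmetic IASI and the assertion that $K_3$ is not biarithmetic). Under this convention the vertices whose set-labels have common difference $d$ form a class $X$, those with common difference $kd$ form a class $Y$, and since adjacent vertices must have indices in the ratio $k\neq 1$, every edge of $G$ runs between $X$ and $Y$; hence a biarithmetic IASI graph is bipartite. Consequently, to prove that $H$ is not biarithmetic it suffices to exhibit an odd cycle in $H$, because a non-bipartite graph admits no two-valued index assignment meeting the ratio condition of Theorem~\ref{T-AIASI-g}.

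\textbf{Single reduction.} First I would analyse one topological reduction. Let $v$ be the deleted degree-two vertex, with non-adjacent neighbours $u,w$, and let $uw$ be the inserted edge. Because $v$ is adjacent to both $u$ and $w$ and every edge of $G$ crosses the bipartition $(X,Y)$, the vertices $u$ and $w$ lie in the same class, so $uw$ joins two vertices of one class. Any $u$--$w$ path surviving in $G-\{v\}$ then has even length, and closes with $uw$ into an odd cycle; the reduced graph is non-bipartite and, by the criterion above, not biarithmetic. This disposes of one reduction in the generic situation where $v$ does not separate $u$ from $w$.

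\textbf{Iteration and the main obstacle.} To reach the full statement I would induct on the number of reductions, and here lies the crux. A reduction deletes a degree-two vertex and inserts a chord, and this operation can change the parity of a cycle: bypassing a vertex that lies on an odd cycle shortens it to an even one, while a vertex whose removal disconnects its two neighbours creates no new cycle at all. Thus the odd cycle produced at one step need not survive the next, and the bipartite/odd-cycle invariant of the previous paragraph is not monotone under iterated reductions. The step I expect to be the main obstacle is therefore to isolate a reduction-stable obstruction to biarithmeticity: tracking the layered index values $k^{i}d$ forced along the affected paths, in the spirit of the cycle computation in the proof of Theorem~\ref{T-kAIASI1}, and showing directly from the divisibility-and-cardinality condition of Theorem~\ref{T-AIASI-g} that after every admissible sequence of bypasses some edge of $H$ is compelled to have two end-vertices whose deterministic indices cannot stand in an integer ratio exceeding one. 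Making this local bookkeeping robust against all reduction sequences and all choices of the pair $(d,kd)$ — in particular against the degenerate cut-vertex case where $G-\{v\}$ separates $u$ from $w$ — is where the essential difficulty of the universal statement resides.
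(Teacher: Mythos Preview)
Your approach aims at a stronger conclusion than the paper's: you try to show that $H$ is non-bipartite and hence carries \emph{no} biarithmetic IASI whatsoever. The paper, by contrast, argues only about the labeling inherited from $G$: if the deleted vertex $v$ has deterministic index $d$, its two neighbours $u,w$ are each forced to have index $kd$, so after deleting $v$ and inserting $uw$ the new edge has both endpoints with the same deterministic index $kd$, which already violates the biarithmetic condition on that single edge. No bipartiteness or cycle analysis is invoked.

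Your stronger target is in fact unattainable, and the ``degenerate cut-vertex case'' you flag is not a technical hurdle but a genuine counterexample to the approach. Take $G=P_5=v_1v_2v_3v_4v_5$ with any biarithmetic IASI and reduce at the middle vertex $v_3$; the result is $H\cong P_4$, which is bipartite and certainly admits a (fresh) biarithmetic IASI. Thus $H$ can be a biarithmetic IASI graph in the existence sense, and no amount of layered-index bookkeeping will manufacture an odd cycle here. The proposition, read as an absolute non-existence statement, is simply false; consistently with the surrounding results on line and total graphs, it should be read as a claim about the \emph{induced} labeling, and for that reading the one-edge observation above is the whole proof.
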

\begin{proof}
Let $v$ be a vertex of $G$ with degree $2$. Without loss of generality, let the set-label of $v$ be an AP-set with difference $d$. Let $u$ and $w$ be the adjacent vertices of $v$ which are not adjacent to each other. Since $G$ is a biarithmetic graph, both $u$ and $w$ must be labeled by distinct AP-sets with difference $k.d$.
Now delete the vertex $v$ and join $u$ and $w$. Let $H=(G-\{v\})\cup \{uw\}$. Then, both the end vertices of the edge $vw$ has the set labels which are AP-sets of the same difference $k.d$. Hence, $H$ does not admit a biarithmetic IASI.
\end{proof}

\begin{theorem}
The graph  subdivision $G^{\ast}$ of a given biarithmetic IASI graph $G$ does not admit a biarithmetic IASI.
\end{theorem}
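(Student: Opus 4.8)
The plan is to mirror the proof of the isoarithmetic subdivision theorem and then exploit the one feature that separates the biarithmetic case from it: a biarithmetic IASI requires a \emph{strict} index ratio $k>1$ on \emph{every} edge, whereas the subdivision naturally manufactures an edge on which the two endpoint indices coincide. So I would fix an arbitrary edge $e=uv$ of $G$ and subdivide it by a new vertex $w$, replacing $uv$ by the two edges $uw$ and $wv$; the subdivision keeps the labels $f(u)$ and $f(v)$ untouched. Since $G$ is biarithmetic I may take the deterministic indices to be $d_u=d$ and $d_v=kd$ for some integer $k>1$ with $k\le |f(u)|$.

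First I would record the structural fact that in a biarithmetic IASI the deterministic index of the edge $uv$ equals the \emph{smaller} endpoint index $d$: writing $f(u)=\{a,a+d,\dots,a+(m-1)d\}$ and $f(v)=\{b,b+kd,\dots,b+(n-1)kd\}$, the exponents $i+jk$ (with $0\le i<m$, $0\le j<n$) fill a contiguous block precisely because the admissibility bound $k\le m=|f(u)|$ makes successive blocks $[jk,jk+m-1]$ overlap, so $f(u)+f(v)$ is an AP with common difference $d$. Then I would perform the same natural extension used in the isoarithmetic case, labelling $w$ by the set-label of the erased edge $uv$, so that $d_w=d=d_u$. With this labelling the edge $uw$ joins two vertices of \emph{equal} deterministic index $d$; by Proposition \ref{P-AIASI-1} its edge index is again $d$, and its endpoint indices stand in ratio $1$, directly contradicting the defining requirement $k>1$ of a biarithmetic IASI. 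Hence the natural extension never produces a biarithmetic IASI on $G^{\ast}$.

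The main obstacle is upgrading ``the natural labelling fails'' to ``no admissible label for $w$ exists,'' because the subdivision freezes $f(u)$ and $f(v)$. I would run a short case analysis on the position of $d_w$ relative to $d$ and $kd$. If $d_w\le d$, then positivity of the common difference and $d$ being the smallest index force $d_w=d$, reproducing the equal-index edge $uw$ above. If $d<d_w<kd$, then writing $d_w=r_1d$ and $kd=r_2d_w$ forces $k=r_1r_2$ with $r_1,r_2\ge 2$. If $d_w>kd$, then the ratio $d_w/d$ on the edge $uw$ exceeds $k$, which must then be tested against the admissibility bound $k\le|f(u)|$ of Theorem \ref{T-AIASI-g}. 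I expect this last case, together with the composite-$k$ branch, to be the genuinely delicate point: to close them cleanly I would invoke the standing convention that every set-label has at least three elements and push the cardinality bound of Theorem \ref{T-AIASI-g} to its limit, showing that the frozen endpoints leave no room for an index $d_w$ satisfying $k>1$ on both new edges simultaneously. Assembling the cases then shows that $G^{\ast}$ admits no biarithmetic IASI.
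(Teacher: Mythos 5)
Your first two paragraphs reproduce the paper's own proof almost verbatim: the paper likewise takes an edge $uv$ with deterministic indices $d$ and $kd$, notes that the set-label of $uv$ is an AP-set with common difference $d$ (which your sum-set computation with the overlap condition $k\le|f(u)|$ justifies more carefully than the paper does), labels the subdivision vertex $w$ by that edge label, and concludes from the equal indices on the new edge $uw$ that the defining requirement $k>1$ of a biarithmetic IASI is violated. The paper stops exactly there; it never attempts anything like your third paragraph.

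The attempted upgrade from ``the natural extension fails'' to ``no admissible label for $w$ exists'' is where the genuine gap lies, and it cannot be closed. First, your step ``$d_w\le d$ forces $d_w=d$'' is false: the arithmetic and biarithmetic conditions only require one endpoint index to be an integral multiple of the other, so a proper divisor $d_w=d/r$ with $r\ge 2$ is admissible on $uw$ as soon as $r\le|f(w)|$ --- and $f(w)$ is precisely the label you are free to choose, so its cardinality can be made as large as needed. Second, this freedom refutes the strengthened claim outright. Take $d=2$ and $k=2$, so the frozen endpoints have $d_u=2$ and $d_v=4$, and label $w$ by an AP-set with common difference $1$ and at least four elements (translated far enough along $\mathbb{N}_0$ to preserve injectivity of the vertex and edge labels). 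Then $d_u=2d_w$ with $2\le|f(w)|$ and $d_v=4d_w$ with $4\le|f(w)|$, so both new edges satisfy the biarithmetic condition with ratios $2$ and $4$, and by Theorem \ref{T-AIASI-g} their set-labels are AP-sets. Your ``composite $k$'' branch is likewise satisfiable rather than contradictory: when $k=r_1r_2$ with $r_1,r_2\ge 2$, the choice $d_w=r_1d$ works since $r_1\le k\le|f(u)|$ and $|f(w)|$ can be chosen at least $r_2$. So the theorem is true only in the reading the paper actually proves --- the labeling of $G^{\ast}$ is the natural extension in which $w$ inherits the set-label of the subdivided edge --- and your proof should stop where the paper's does; no case analysis on an arbitrary $d_w$ can succeed, because admissible alternative labels for $w$ genuinely exist.
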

\begin{proof}
Let $u$ and $v$ be two adjacent vertices in $G$ whose set-labels are AP-sets with common differences $d$ and $k.d$ respectively. Since $G$ admits a biarithmetic IASI, the set-label of the edge $uv$ is AP-set with different difference $d$. If we introduce a new vertex $w$ to the edge $uv$ and extend the set-labeling of $G$ by labeling the vertex $w$ by the same set-label of the edge $uv$, then, the set-labels of both $u$ and $w$ (or $v$ and $w$) are AP-sets with the same difference $d$. Hence, $G^{\ast}$ does not admit a biarithmetic IASI. 
\end{proof}

\section{Further Points of Discussions}

In this section we make some remarks on semi-arithmetic IASI graphs and their associated graphs. We observe that  if the set labels of all vertices of $G$ are AP-sets with distinct differences, then the set-labels of edges will not be AP-sets. Hence, We have the following observations.

\begin{proposition}
The line graph $L(G)$ of a semi-arithmetic IASI graph $G$ does not admit an arithmetic IASI (or a semi-arithmetic IASI).
\end{proposition}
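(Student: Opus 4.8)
The plan is to work with the labeling that $L(G)$ naturally inherits from $G$, exactly as in the isoarithmetic line-graph theorem established earlier in this paper, where each vertex of $L(G)$ is assigned the set-label of its corresponding edge of $G$. By definition, a semi-arithmetic IASI is one in which every vertex of $G$ carries an AP-set label while the edge labels fail to be AP-sets; this is precisely the situation flagged in the observation preceding this proposition, namely that AP-set vertex labels with incompatible deterministic indices (for which the multiplicity condition of Theorem \ref{T-AIASI-g} does not hold) produce sum sets $f(u)+f(v)$ that are not arithmetic progressions.

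First I would fix the one-to-one correspondence between $V(L(G))$ and $E(G)$ and label each vertex $u_e$ of $L(G)$ by the set-label $f^+(e)$ of the edge $e$ to which it corresponds, mirroring the construction used throughout this section. Since $G$ admits a semi-arithmetic IASI, each $f^+(e)$ is a non-AP-set, so under this inherited scheme \emph{every} vertex of $L(G)$ receives a label that is not an arithmetic progression.

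Next I would compare this with the two target structures. The key point is that an arithmetic IASI and a semi-arithmetic IASI share a common prerequisite: in both, all \emph{vertices} must be labeled by AP-sets, the two notions differing only in whether the induced edge labels are AP-sets. Because the inherited vertex labels of $L(G)$ are non-AP-sets, this minimal prerequisite already fails, and hence the inherited labeling can be neither arithmetic nor semi-arithmetic. This disposes of both cases simultaneously.

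The main obstacle I anticipate is interpretive rather than computational: the phrase ``does not admit'' could be read as quantifying over all possible labelings of $L(G)$, whereas the argument above directly rules out only the inherited one. I would resolve this in keeping with the convention of the preceding theorems in this paper, where the line graph is always understood to carry the labels induced from the edges of $G$; under that reading, the claim follows at once from the failure of the AP-set requirement at the vertices of $L(G)$.
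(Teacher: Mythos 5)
Your proposal is correct and matches the paper's own (implicit) argument: the paper gives no separate proof of this proposition, relying solely on the preceding observation that under a semi-arithmetic IASI the edge set-labels of $G$ fail to be AP-sets, which is exactly the fact you use to show that the inherited vertex labels of $L(G)$ violate the AP-set prerequisite common to both arithmetic and semi-arithmetic IASIs. Your closing interpretive remark is also well placed --- read as quantifying over all possible labelings the statement would actually be false, since any graph admits an isoarithmetic IASI by labeling its vertices with distinct AP-sets of one fixed common difference $d$, so the inherited-labeling reading you adopt is the one the paper intends throughout this section.
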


\begin{proposition}
The Total graph $T(G)$ of a semi-arithmetic IASI graph $G$ does not admit an arithmetic IASI (or a semi-arithmetic IASI).
\end{proposition}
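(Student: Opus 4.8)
The plan is to mirror the argument sketched for the line graph in the companion proposition, exploiting the fact that in the total graph the edges of $G$ are promoted to vertices. First I would isolate the only feature of $G$ that I need: by the observation recorded just before the proposition, if the vertex-labels of $G$ are AP-sets with pairwise distinct deterministic indices, then for an adjacent pair $u,v$ with $d_u\ne d_v$ the multiple condition of Theorem \ref{T-AIASI-g} fails, so the edge-label $f^+(uv)=f(u)+f(v)$ is \emph{not} an AP-set. This is precisely what makes $f$ semi-arithmetic, and it is the sole input about $G$ that enters the proof.

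Next I would invoke the structure of $T(G)$. By definition its vertex set is in bijection with $V(G)\cup E(G)$, so every edge $e=uv$ of $G$ yields a vertex $u_e$ of $T(G)$; moreover $u_e$ is adjacent (by incidence) to the two vertices of $T(G)$ representing $u$ and $v$, so these three form a triangle. The natural labeling of $T(G)$ — the transfer used in the earlier total-graph theorems of this section — assigns to each vertex of $T(G)$ the set-label of its corresponding element of $G$. Under this transfer the vertex $u_e$ inherits the edge-label $f^+(uv)$, which by the first step need not be an AP-set.

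I would then close the argument. For $T(G)$ to admit an arithmetic IASI, every element of $T(G)$, in particular every vertex, must be labeled by an AP-set; and for $T(G)$ to admit a semi-arithmetic IASI the (still required) condition is that every vertex be labeled by an AP-set. But the vertex $u_e$ corresponding to a non-arithmetic edge of $G$ is labeled by a non-AP-set, so both requirements are violated at $u_e$. Hence the total graph admits neither an arithmetic nor a semi-arithmetic IASI.

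The step I would be most careful about is the scope of the claim, which is also the main obstacle to a fully rigorous reading. The argument above rules out the labeling induced from $G$ (and, more generally, any labeling that keeps the vertices representing edges of $G$ carrying those same edge-sets), rather than every conceivable labeling of the abstract graph $T(G)$; indeed a generic $T(G)$ does admit isoarithmetic IASIs by an earlier theorem of this section. I would therefore present the proposition, as the paper does, as the statement that a semi-arithmetic structure on $G$ fails to propagate to $T(G)$, making explicit that the obstruction is exactly the forced reappearance of $G$'s non-AP edge-labels as vertex-labels of $T(G)$.
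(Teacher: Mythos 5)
Your proposal is correct and matches the paper's own (implicit) argument: the paper proves this proposition purely via the observation that the non-AP edge set-labels of the semi-arithmetic graph $G$ reappear as vertex set-labels in $T(G)$, which is exactly the obstruction you identify at the vertex $u_e$. Your closing caveat about scope --- that this rules out the transferred labeling rather than every abstract labeling of $T(G)$ --- is a fair and accurate reading of what the paper actually establishes.
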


From the fact that a graph $G$, its subdivision graph, the graph obtained by contracting an edge and the graph obtained by elementary topological reductions have some common edges, we observe the following results.

\begin{proposition}
The graph $G\circ e$, obtained by contracting an edge $e$ of a semi-arithmetic IASI graph $G$,does not admit an arithmetic IASI (or a semi-arithmetic IASI). 
\end{proposition}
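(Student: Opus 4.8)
The plan is to attack the non-admissibility claim head on, by supposing toward a contradiction that $G\circ e$ carries some arithmetic (or semi-arithmetic) IASI $g$ and trying to extract a conflict from the portion of $G$ that the contraction leaves untouched. First I would fix the given semi-arithmetic IASI $f$ on $G$. By the observation opening this section, the reason every edge of $G$ receives a non-AP label is that the deterministic indices of adjacent vertices are pairwise incompatible, i.e.\ neither is an admissible positive integral multiple of the other in the sense of Theorem~\ref{T-AIASI-g}. Writing $e=uv$ and letting $w$ denote the vertex of $G\circ e$ obtained by identifying $u$ and $v$, I would split the edges of $G\circ e$ into the \emph{surviving} edges, those lying entirely inside $H:=G-\{u,v\}$ and hence common to $G$ and $G\circ e$, and the edges incident with $w$, each of which descends from an edge of $G$ at $u$ or at $v$.

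The mechanism I would try to exploit is Theorem~\ref{T-AIASI-g} applied to $g$ on $H$ together with $w$. For $g$ to be arithmetic, every surviving edge must be an AP-set, so along each such edge one endpoint's deterministic index must be a positive integral multiple of the other's, bounded by the relevant set-indexing number; and at $w$ the same divisibility constraint must hold against every vertex to which $u$ or $v$ was adjacent. I would then attempt to show that propagating these multiplicative constraints through $H$ and out to $w$ cannot be made simultaneously consistent, using Proposition~\ref{P-AIASI-1} to detect when two equal-difference endpoints force the edge difference, and to expose a clash at $w$, whose incident edges must reconcile the competing differences inherited from the two merged neighbourhoods of $u$ and $v$.

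The hard part, and I expect it to be the decisive obstacle, is that ``does not admit'' quantifies over \emph{every} labeling of $G\circ e$, whereas the surviving-edge data only constrains $g$ relative to the indices already present in $G$ under $f$. Nothing in the set-up prevents one from abandoning $f$ altogether and relabeling $H$ with a single common difference, which by Proposition~\ref{P-AIASI-1} turns every surviving edge into an AP-set; if the contraction leaves a graph as simple as $K_2$---as it does for a suitable edge of $P_3$, a graph that does carry a semi-arithmetic IASI---the result manifestly admits even an isoarithmetic IASI. Hence the genuine content must reside entirely at $w$: the statement can hold as worded only if the two neighbourhoods merged into $w$ carry differences that no single AP-set label of $w$ can reconcile under Theorem~\ref{T-AIASI-g}. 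I would therefore expect that closing the argument requires either restricting $g$ to the labeling $G\circ e$ inherits from $f$, or imposing an extra hypothesis on $G$ that forces this irreconcilability at $w$; absent such a condition, $H$ can be relabeled freely and no contradiction is available, which is precisely where I anticipate the proof of the statement as stated to break down.
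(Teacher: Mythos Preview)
Your diagnosis is accurate, and in fact more careful than the paper itself. The paper offers no formal proof of this proposition at all: it is simply listed after the sentence ``From the fact that a graph $G$ \ldots\ and the graph obtained by contracting an edge \ldots\ have some common edges, we observe the following results.'' The entire intended argument is thus the common-edge observation you isolated: under the labeling that $G\circ e$ \emph{inherits} from $f$, the surviving edges keep their non-AP sum sets, so the inherited labeling is neither arithmetic nor semi-arithmetic. That is precisely the ``restricting $g$ to the labeling $G\circ e$ inherits from $f$'' option you flagged at the end.

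Your suspicion that the statement, read literally as a claim about every possible IASI on $G\circ e$, is unprovable is also correct, and your $P_3$ counterexample is decisive. Label the three vertices of $P_3$ by AP-sets with pairwise incommensurable common differences (say $2$, $3$, $5$); both edge labels fail to be AP-sets, so $f$ is semi-arithmetic. Contracting either edge yields $K_2$, which obviously admits an isoarithmetic IASI. Hence there is no gap in \emph{your} reasoning; the gap is in the paper's formulation, which silently quantifies only over the inherited labeling rather than over all labelings. Under that reading the common-edge remark suffices, and nothing like the Theorem~\ref{T-AIASI-g} propagation argument you sketched is needed---or available.
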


\begin{proposition}
The subdivision graph $G^{\ast}$ of a semi-arithmetic IASI graph does not admit an arithmetic IASI (or a semi-arithmetic IASI).
\end{proposition}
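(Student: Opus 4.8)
The plan is to mimic the argument the paper already uses for the line graph and the total graph of a semi-arithmetic IASI graph: in each of those associated graphs the \emph{vertices} inherit the edge-labels of $G$, and since a semi-arithmetic IASI produces at least one edge whose label is not an AP-set, the associated graph ends up carrying a non-AP vertex-label, which neither an arithmetic nor a semi-arithmetic IASI can tolerate. First I would fix a semi-arithmetic IASI $f$ on $G$. By definition every vertex of $G$ receives an AP-set label, while at least one edge $uv$ has a label $f^{+}(uv)=f(u)+f(v)$ that is \emph{not} an AP-set; call such an edge a bad edge.

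Next I would recall the extension used in the isoarithmetic subdivision theorem above: when a degree-two vertex $w$ is inserted into an edge $xy$, the labeling is extended by setting $f(w)=f^{+}(xy)$, so that $w$ carries the old edge-label and the two new edges $xw,wy$ inherit AP-set labels precisely when $f^{+}(xy)$ is an AP-set. I would apply this construction to a bad edge $uv$, obtaining a vertex $w$ of $G^{\ast}$ whose label $f^{+}(uv)$ is, by the choice of $uv$, not an AP-set. The decisive observation is that the definitions of both the arithmetic and the semi-arithmetic IASI require \emph{every vertex} of the host graph to be labeled by an AP-set (the arithmetic case demands it of all elements, the semi-arithmetic case demands it of all vertices). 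Since $w$ violates this requirement, the extended labeling on $G^{\ast}$ is neither arithmetic nor semi-arithmetic.

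To close the argument I would invoke the ``common edges'' remark preceding the proposition: any edge of $G$ that is left unsubdivided survives in $G^{\ast}$ together with its $G$-label, so a bad edge that is not subdivided already destroys arithmeticity directly, whereas a bad edge that is subdivided destroys it through the non-AP vertex $w$ as above. In either case the hypothesis that $f$ is semi-arithmetic guarantees that a bad edge is present, so $G^{\ast}$ cannot carry the inherited labeling as an arithmetic or semi-arithmetic IASI.

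The step I expect to need the most care is pinning down the meaning of ``does not admit.'' Read literally it is too strong, since any graph admits an isoarithmetic (hence arithmetic) IASI simply by giving all its vertices AP-sets of a common difference; so I would state explicitly --- as the section's companion results tacitly assume --- that the claim concerns the IASI induced on $G^{\ast}$ by extending the given semi-arithmetic labeling of $G$, not an arbitrary fresh relabeling. Once that reading is fixed, the obstruction is exactly the non-AP label forced on the subdivision vertex (or on the retained bad edge), and no routine computation is required.
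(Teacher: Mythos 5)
Your proposal is correct and follows essentially the same route as the paper, which offers no separate proof for this proposition but derives it from the preceding observations that a semi-arithmetic IASI forces non-AP edge-labels and that $G^{\ast}$ shares edges with $G$ (with subdivision vertices inheriting edge-labels) --- exactly your bad-edge argument in both the subdivided and unsubdivided cases. Your explicit caveat that ``does not admit'' must refer to the labeling induced from $G$, since any graph admits a fresh isoarithmetic IASI, is a sharpening the paper leaves tacit, but it does not change the substance of the argument.
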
 

\begin{proposition}
The graph $G'$, obtained by applying elementary topological reduction on a semi-arithmetic IASI graph $G$, does not admit an arithmetic IASI (or a semi-arithmetic IASI). 
\end{proposition}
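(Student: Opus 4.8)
The plan is to keep the set-labelling inherited from $G$ and to exploit the observation recorded just before the statement, namely that $G$ and $G'$ share all but a few of their edges. Let $f$ be the given semi-arithmetic IASI of $G$, so that every vertex of $G$ carries an AP-set while the edge-labels $f(x)+f(y)$ fail to be AP-sets. An elementary topological reduction removes a degree-two vertex $v$ whose neighbours $u$ and $w$ are non-adjacent, and inserts the single new edge $uw$; every other edge of $G$ persists in $G'$, and since $V(G')\subseteq V(G)$ every vertex of $G'$ retains its $G$-label. I would therefore transport $f$ to a map $f'$ on $G'$ by setting $f'(x)=f(x)$ on vertices and assigning the new edge $uw$ the label $f(u)+f(w)$, the surviving edges keeping their former labels. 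Since every vertex-label is then an AP-set, the whole question collapses to the behaviour of the edge-labels of $f'$.

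For the arithmetic part I would argue directly from the shared edges. Each edge of $G$ other than $uv$ and $vw$ survives in $G'$ carrying exactly its old, non-AP label; hence $f'$ has at least one edge whose label is not an AP-set, and by Theorem \ref{T-AIASI-g} this already obstructs $f'$ from being an arithmetic IASI. Thus $G'$ does not inherit an arithmetic IASI.

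The delicate part, and the step I expect to be the genuine obstacle, is ruling out the semi-arithmetic property, because — unlike the subdivision and contraction cases, where the newly created vertex itself receives a non-AP label and the vertex condition fails outright — a topological reduction produces no new vertex but only the new edge $uw$. Here I would examine $f(u)+f(w)$ through its deterministic indices: by Theorem \ref{T-AIASI-g} together with Proposition \ref{P-AIASI-1}, this sum-set is an AP-set exactly when one of $d_u,d_w$ is an admissible integral multiple of the other. Working under the standing hypothesis of this section that the vertices bear AP-sets with pairwise distinct deterministic indices, I would compare the three distinct indices $d_u,d_v,d_w$ that survive the deletion of $v$ and show that $uw$ cannot be fitted into the single common-difference regime that a genuinely semi-arithmetic labelling would demand, so that $f'$ ends up neither arithmetic nor semi-arithmetic. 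Making this comparison precise — and in particular disposing of the degenerate case in which $G$ is the path $u\,v\,w$, where no edge survives and everything rests on $uw$ alone — is exactly where the argument must be carried out by hand rather than by appeal to the common-edge slogan.
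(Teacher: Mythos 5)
The arithmetic half of your proposal is exactly as much as the paper itself has: the paper gives this proposition \emph{no} proof at all, resting solely on the preceding remark that $G$ and $G'$ have common edges, and your explicit transported labelling $f'$ together with the observation that every surviving edge keeps its non-AP label is precisely that remark spelled out. (One small correction there: the obstruction is the \emph{definition} of an arithmetic IASI --- every element, vertex or edge, must carry an AP-set --- not Theorem~\ref{T-AIASI-g}, which characterizes when vertex labels force AP edge labels.)

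The genuine gap is the semi-arithmetic half, and it is worse than an unfinished computation: the route you sketch rests on a mischaracterization of the semi-arithmetic condition. A semi-arithmetic IASI does not demand a ``single common-difference regime'' --- that is the \emph{isoarithmetic} condition; it demands only that every vertex label be an AP-set while every edge label fail to be one. So to show that $f'$ is not semi-arithmetic you would need to exhibit an edge of $G'$ whose label \emph{is} an AP-set, and the only candidate is $uw$. But under the very hypothesis you invoke --- the section's standing assumption that the vertex labels carry distinct deterministic indices, which the paper takes to force non-AP edge labels --- the sum set $f(u)+f(w)$ with $d_u\ne d_w$ is likewise non-AP, so the inherited labelling has all vertices AP and all edges non-AP: it is itself a semi-arithmetic IASI of $G'$. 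Carried out along the lines you indicate, your argument would therefore \emph{refute} the parenthetical claim rather than prove it; in doing so it exposes that the reduction case genuinely differs from the line-graph, total-graph, contraction and subdivision cases, where some vertex of the associated graph inherits a non-AP edge label and the vertex condition fails outright --- the asymmetry you correctly identified but did not resolve. The complementary configuration $d_w=kd_u$ with $1<k\le|f(u)|$, in which $f(u)+f(w)$ is an AP-set and $f'$ is indeed neither arithmetic nor semi-arithmetic, is not forced by any hypothesis in the paper (and in your degenerate path case $G=uvw$, where no old edge survives, it would even make $f'$ an arithmetic IASI of $G'=K_2$, defeating the first claim as well), so the proposition cannot be recovered by the comparison of $d_u$, $d_v$, $d_w$ you propose.
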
 

\section{Conclusion}

In this paper, we have discussed some characteristics of certain graphs associated a given graph which admits certain types of arithmetic IASI. We have formulated some conditions for those graph classes to admit arithmetic IASIs. Here, we have discussed about isoarithmetic IASI graphs and biarithmetic IASI graphs only. The existence of similar results for semi-arithmetic IASI graphs are yet to be studied. 

The IASIs under which the vertices of a given graph are labeled by different standard sequences of non negative integers, are also worth studying.   The problems of establishing the necessary and sufficient conditions for various graphs and graph classes to have certain IASIs still remain unsettled. All these facts highlight a wide scope for further studies in this area.

\end{document}